\documentclass[12pt]{amsart}

\textwidth=15cm
\textheight=22.5cm
\topmargin=0.5cm
\oddsidemargin=0.5cm
\evensidemargin=0.5cm

\newtheorem{theorem}{Theorem}[section]
\newtheorem{proposition}[theorem]{Proposition}
\newtheorem{lemma}[theorem]{Lemma}
\newtheorem{corollary}[theorem]{Corollary}

\theoremstyle{definition}


\newcommand{\ZZ}{ \ensuremath{\mathbb{Z}}}

\newcommand{\conv}{\mathrm{conv}}

\def\cocoa{{\hbox{\rm C\kern-.13em o\kern-.07em C\kern-.13em o\kern-.15em A}}}


\begin{document}

\title[The numbers of edges of 5-polytopes]{The numbers of edges of 5-polytopes\\ with a given number of vertices}

\author{Takuya Kusunoki}
\address{
Takuya Kusunoki,
Department of Pure and Applied Mathematics,
Graduate School of Information Science and Technology,
Osaka University,
Suita, Osaka, 565-0871, Japan}
\email{t-kusunoki@ist.osaka-u.ac.jp}

\author{Satoshi Murai}
\address{
Satoshi Murai,
Department of Mathematics,
Faculty of Education
Waseda University
1-6-1 Nishi-Waseda, Shinjuku, Tokyo 169-8050, Japan}
\email{s-murai@waseda.jp}
\thanks{The second author was partially supported by KAKENHI16K05102.
}


\begin{abstract}
A basic combinatorial invariant of a convex polytope $P$ is its $f$-vector $f(P)=(f_0,f_1,\dots,f_{\dim P-1})$,
where $f_i$ is the number of $i$-dimensional faces of $P$.
Steinitz characterized all possible $f$-vectors of $3$-polytopes and Gr\"unbaum characterized the pairs given by the first two entries of the $f$-vectors of $4$-polytopes.
In this paper, we characterize the pairs given by the first two entries of the $f$-vectors of $5$-polytopes.
The same result was also proved by Pineda-Villavicencio, Ugon and Yost independently.
\end{abstract}

\maketitle

\section{Introduction}

The study of $f$-vectors of convex polytopes is one of the central research topic in convex geometry.
We call a $d$-dimensional convex polytope a \textit{$d$-polytope}.
For a convex polytope (or a polyhedral complex) $P$, we write $f_i(P)$ for the number of $i$-dimensional faces of $P$.
The \textit{$f$-vector} of a $d$-polytope $P$ is the vector $f(P)=(f_0(P),f_1(P),\dots,f_{d-1}(P))$.
In 1906, Steinitz characterized all possible $f$-vectors of $3$-polytopes (see \cite[\S 10.3]{Gr}).
While a characterization of $f$-vectors of $4$-polytopes is a big open problem in convex geometry,
for any $0 \leq i < j \leq 3$,
the following set was characterized by
Gr\"unbaum \cite{Gr},
Barnette \cite{Bar}
and Barnette--Reay \cite{BR}
(see also \cite[Theorem 3.9]{BL})
\[
\big\{(f_i(P),f_j(P)): P \mbox{ is a $4$-polytope}\big\}.
\]
Moreover,
Sj\"oberg and Ziegler \cite{SZ} recently characterize all possible values of the pairs $(f_0,f_{03})$ of flag face numbers of $4$-polytopes.
In this paper, we characterize all possible $(f_0,f_1)$ pairs of $5$-polytopes.

Let
$$\mathcal E^d=\{(f_0(P),f_1(P)): \mbox{ $P$ is a $d$-polytope}\}.$$
The set $\mathcal E^3$ was determined by Steinitz in 1906 who shows that
$$
\textstyle \mathcal E^3=\{ (v,e): \frac 3 2 v \leq e \leq 3v-6\}.$$
Note that, by Euler's relation, this actually determines all possible $f$-vectors of $3$-polytopes.
In higher dimensions, it is easy to see that any $d$-polytope $P$ satisfies
\begin{align}
\label{inequality}
\frac d 2 f_0(P) \leq f_1(P) \leq {f_0(P) \choose 2}.
\end{align}
Indeed, the first inequality follows since $f_1(P)$ equals to $\frac 1 2 $ times the sum of degrees of the vertices of $P$ and since each vertex of $P$ has degree $\geq d$.
Gr\"unbaum \cite[\S 10.4]{Gr} proved that the inequality \eqref{inequality} characterizes $\mathcal E^4$, with four exceptions.
More precisely, he proved the following statement.

\begin{theorem}[Gr\"unbaum]
\label{4poly}
\[
\mathcal E^4=\left\{ (v,e): 2 v \leq e \leq {v \choose 2} \right\}\setminus \big\{(6,12),(7,14),(8,17),(10,20)\big\}.
\]
\end{theorem}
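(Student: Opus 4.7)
The proof splits into two halves: the four exceptional pairs cannot be realized, and every other pair in the stated rectangle is realized by some $4$-polytope.

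For the impossibility part, the three pairs $(6,12)$, $(7,14)$ and $(10,20)$ all satisfy $e=2v$. By the handshake lemma $2e=\sum_u\deg u$ and the fact that every vertex of a $4$-polytope has degree at least $4$, every vertex must have degree exactly $4$, i.e.\ $P$ is simple. Dually, this amounts to asking whether a simplicial $4$-polytope can have $6$, $7$ or $10$ facets. A short $g$-theorem computation on the $h$-vector $(1,h_1,h_2,h_1,1)$ with $h_1=f_0-4$, using the lower bound theorem $h_2\ge h_1$ together with the Macaulay constraint $h_2-h_1\le\binom{h_1}{2}$, rules out all three facet counts (the achievable values of $f_3=2+2h_1+h_2$ form the set $\{5\}\cup\{8,9\}\cup\{n:n\ge 11\}$). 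For the remaining case $(8,17)$, the degree sum $34$ forces a degree sequence on the $1$-skeleton of the form $(4^7,6)$ or $(4^6,5^2)$. Since a degree-$4$ vertex has a $3$-simplex as its vertex figure, $P$ is ``nearly simple'', and a direct analysis of how the one or two non-simple vertices can be incorporated into the face structure on only $8$ vertices rules out both sequences.

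For the realization part, the backbone consists of cyclic polytopes, simple polytopes, and pyramids over $3$-polytopes. The cyclic polytope $C(v,4)$ is $2$-neighborly and realizes $(v,\binom{v}{2})$ for every $v\ge 5$, giving the upper edge of the rectangle. Any simple $4$-polytope on $v$ vertices realizes $(v,2v)$; concrete examples such as $\Delta^4$, $\Delta^3\times I$, $\Delta^2\times\Delta^2$, the $4$-cube, and duals of cyclic polytopes exist for $v=5$ and for every $v\ge 8$ with $v\ne 10$, giving the lower edge at these $v$. For the interior, the pyramid over a $3$-polytope $Q$ with $(f_0(Q),f_1(Q))=(v_0,e_0)$ yields a $4$-polytope with $(v,e)=(v_0+1,\,e_0+v_0)$; combined with Steinitz's theorem $\tfrac 3 2 v_0\le e_0\le 3v_0-6$, this realizes every lattice point with $\tfrac 5 2(v-1)\le e\le 4v-10$.

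The main obstacle is filling the two remaining strips $2v\le e<\tfrac 5 2(v-1)$ and $4v-10<e<\binom{v}{2}$. For the upper strip one moves inward from $C(v,4)$ by vertex pulling or free-sum constructions, each of which decreases the edge count by a controlled amount. For the lower strip one starts from a simple or near-simple $4$-polytope and applies local operations such as vertex truncation, stacking a $4$-simplex onto a simplicial facet, or taking a product with a segment or triangle; each such operation changes $(v,e)$ by a small known increment, and an induction on $v$ reduces everything to finitely many small cases. The delicate part is the ad hoc handling of pairs $(v,e)$ with $v\in\{6,7,10\}$, immediately next to the forbidden pairs, where explicit polytopes must be exhibited; this is where the bulk of the realization argument actually resides.
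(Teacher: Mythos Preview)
The paper does not prove this theorem. Theorem~\ref{4poly} is stated with attribution to Gr\"unbaum and is cited from \cite[\S 10.4]{Gr}; it is used as a black box throughout (in Lemma~\ref{2.1} for the sufficiency direction and in case~(4) of Theorem~\ref{3.8} for the necessity direction). So there is no ``paper's own proof'' to compare your proposal against.

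As for the proposal itself, it is a reasonable outline of how Gr\"unbaum's argument goes, but it is not a proof. The impossibility of $(6,12)$, $(7,14)$, $(10,20)$ via the $g$-theorem computation is correct and complete. The impossibility of $(8,17)$, however, is asserted rather than shown: ``a direct analysis \dots\ rules out both sequences'' is a promise, not an argument, and this case genuinely requires work (the paper in fact invokes exactly this exception in case~(4) of Theorem~\ref{3.8}, so it is not a throwaway). Similarly, on the realization side you explicitly acknowledge that ``the bulk of the realization argument actually resides'' in the ad~hoc small-$v$ cases and the filling of the two strips, and then do not carry it out. What you have written is a plausible strategy memo, not a proof; if you were asked to supply a proof you would still need to execute the $(8,17)$ case and exhibit the finitely many polytopes (or the inductive step covering them) that you defer.
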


In dimension $5$, the situation is more complicated.
The set $\mathcal E^5$ is close to the set of integer points satisfying \eqref{inequality},
but there are not only a finite list of exceptions but also an infinite family of exceptions.
Indeed, our main result is the following.

\begin{theorem}
\label{mainthm}
Let
$L=\{(v, \lfloor \frac 5 2 v +1\rfloor): v \geq 7\}$
and $G=\{ (8,20),(9,25),(13,35)\}$. 
Then
\[\mathcal E^5=
\left\{ (v,e): \frac  5 2 v \leq e \leq {v \choose 2}\right\} \setminus (L \cup G).
\]
\end{theorem}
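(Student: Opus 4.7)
The plan is to split the proof into two parts: realizability of every pair $(v,e)$ in the admissible region outside $L \cup G$, and non-realizability of the pairs in $L \cup G$. The first part is essentially a construction problem, while the second requires structural obstructions.

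For realizability, I would assemble a small library of base polytopes and propagate by local polytopal operations. Simple $5$-polytopes, such as products of two simplices whose dimensions add up to $5$, realize pairs on the lower line $e = \tfrac{5}{2}v$, while cyclic polytopes $C_5(v)$ are neighborly and realize the upper line $e = \binom{v}{2}$. To reach interior pairs I would use pyramid formation (which increments $f_0$ by one and $f_1$ by the number of vertices of the base), vertex truncation, stacking a new vertex beyond a facet, and connected sums. The central combinatorial lemma should assert that from $(v,e) \in \mathcal E^5$ one can reach all $(v+1, e+k)$ for $k$ in a suitable range; combined with a handful of base cases (verified by hand or by computer), this should cover everything outside $L \cup G$.

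For non-realizability of $L$, the crucial observation is that every vertex of a $5$-polytope has degree at least $5$, so the total excess degree $\xi(P) = 2f_1(P) - 5f_0(P)$ is non-negative, and it equals $1$ or $2$ (depending on the parity of $v$) for pairs in $L$. A vertex of degree $5$ has a vertex figure that is a $4$-polytope with exactly $5$ vertices, hence a $4$-simplex; such a vertex is \emph{simple}, its local structure being that of the $5$-simplex. Thus a polytope realizing a point of $L$ would have at most two non-simple vertices. The plan is to derive a contradiction by local-to-global analysis: around each simple vertex the five incident edges and ten incident $2$-faces are rigidly constrained, and a double count of $(v,F)$ incidences with $F$ ranging over $2$-faces, combined with the local structure at the exceptional vertex or vertices, should preclude existence. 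Alternatively one can invoke or adapt the excess-degree theorems of Pineda-Villavicencio, Ugon, and Yost, which rule out small values of $\xi$ in dimensions $\geq 4$.

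The three pairs in $G$ require separate treatment. The pair $(8,20)$ has $\xi = 0$, so a realizing polytope would be a simple $5$-polytope with only $8$ vertices; this can be excluded because no simple $5$-polytope has $7$, $8$, or $9$ vertices (the smallest after the $5$-simplex on $6$ vertices are products of simplices with $10$ or more). The pairs $(9,25)$ and $(13,35)$ both have $\xi = 5$ and can be handled by enumeration of the possible degree sequences combined with vertex-figure analysis. I expect the main obstacle to be the exclusion of the infinite family $L$: the construction side is largely bookkeeping, but uniformly excluding near-simple $5$-polytopes of every size requires a robust structural argument that reconciles the rigid local constraints at simple vertices with the small global excess degree. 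That is where the real combinatorial content of the theorem will lie.
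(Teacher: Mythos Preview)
Your outline is broadly sound, but it misjudges where the difficulty lies and is missing the paper's key technique: systematic passage to the dual polytope.

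For the exclusion of $L$, the paper does \emph{not} use a double count of vertex--face incidences. Instead it dualizes: a vertex of degree $k$ in $P$ corresponds to a facet of $P^*$ with exactly $k$ facets, so a $5$-polytope realizing a point of $L$ yields a $P^*$ with at most two non-simplex facets, each a $4$-polytope with $6$ or $7$ facets. Since the remaining facets of $P^*$ are simplices, these exceptional facets must themselves be simplicial (or nearly so). But the Lower and Upper Bound Theorems forbid simplicial $4$-polytopes with $6$ or $7$ facets, and the explicit classification of $4$-polytopes with $6$ facets shows each has at least two non-simplex $3$-faces. This dispatches $L$ in half a page --- it is not the main obstacle.

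The hard case is $(13,35)$, which you treat in one line. Here excess degree $5$ gives seven possible degree sequences, and the paper analyzes each in the dual. The arguments for the individual cases invoke the classification of $4$-polytopes with $6$ facets, structural lemmas about almost-simplicial $4$-polytopes with $7$ or $8$ facets, the Gr\"unbaum gap $(8,17)\notin\mathcal E^4$, $h$-vectors of simplicial balls and their boundaries (for a shelling argument in one subcase), and the Lower Bound Theorem for pseudomanifolds with boundary. Your ``vertex-figure analysis'' is the right starting point --- vertex figures of $P$ are combinatorially the facets of $P^*$ --- but you should expect this single case to dominate the necessity proof.

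On the sufficiency side, the paper's propagation is coarser and cleaner than your $(v,e)\to(v+1,e+k)$ scheme: it first covers the band $3v-3\le e\le\binom{v}{2}$ by taking pyramids over $4$-polytopes (directly importing Gr\"unbaum's $\mathcal E^4$), and then truncates simple vertices to move $(v,e)\to(v+4,e+10)$ for the lower region, reducing everything to seven small values of $v$. Your proposed operations would also work, but the pyramid-over-$\mathcal E^4$ shortcut eliminates most of the bookkeeping.
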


Here $\lfloor a \rfloor$ denotes the integer part of a rational number $a$.
Note that it is not hard to see $(8,20) \not \in \mathcal E^5$ since a $5$-polytope $P$ with $f_1(P)=\frac 5 2 f_0(P)$ must be a simple polytope.
Also, $(9,25) \not \in \mathcal E^5$ was proved in \cite{PUY} recently.

The following table illustrates the shape of $\mathcal E^5$.

\begin{figure}[h]
{\unitlength 0.1in%
\begin{picture}(40.0000,45.6500)(16.0000,-67.0000)%
\put(20.0000,-59.0000){\makebox(0,0){$15$}}%
\put(20.0000,-49.0000){\makebox(0,0){$25$}}%
\put(20.0000,-39.0000){\makebox(0,0){$35$}}%
\put(20.0000,-29.0000){\makebox(0,0){$45$}}%
%
\special{pn 8}%
\special{pa 1800 6400}%
\special{pa 5600 6400}%
\special{fp}%
\special{sh 1}%
\special{pa 5600 6400}%
\special{pa 5533 6380}%
\special{pa 5547 6400}%
\special{pa 5533 6420}%
\special{pa 5600 6400}%
\special{fp}%
\special{pn 8}%
\special{pn 8}%
\special{pa 1600 7400}%
\special{pa 1600 7400}%
\special{ip}%
\special{pn 8}%
\special{pa 1800 7400}%
\special{pa 1805 7399}%
\special{pa 1808 7398}%
\special{ip}%
\special{pa 1843 7387}%
\special{pa 1851 7384}%
\special{ip}%
\special{pa 1885 7370}%
\special{pa 1890 7367}%
\special{pa 1892 7366}%
\special{ip}%
\special{pa 1925 7349}%
\special{pa 1932 7345}%
\special{ip}%
\special{pa 1964 7326}%
\special{pa 1965 7325}%
\special{pa 1970 7321}%
\special{pa 1970 7321}%
\special{ip}%
\special{pa 2001 7299}%
\special{pa 2007 7294}%
\special{ip}%
\special{pa 2036 7271}%
\special{pa 2042 7266}%
\special{ip}%
\special{pa 2070 7241}%
\special{pa 2070 7241}%
\special{pa 2075 7237}%
\special{pa 2076 7236}%
\special{ip}%
\special{pa 2102 7210}%
\special{pa 2108 7204}%
\special{ip}%
\special{pa 2134 7177}%
\special{pa 2135 7176}%
\special{pa 2139 7171}%
\special{ip}%
\special{pa 2164 7143}%
\special{pa 2169 7137}%
\special{ip}%
\special{pa 2193 7109}%
\special{pa 2198 7102}%
\special{ip}%
\special{pa 2221 7073}%
\special{pa 2225 7068}%
\special{pa 2226 7067}%
\special{ip}%
\special{pa 2248 7037}%
\special{pa 2253 7030}%
\special{ip}%
\special{pa 2274 7000}%
\special{pa 2279 6993}%
\special{ip}%
\special{pa 2300 6963}%
\special{pa 2300 6962}%
\special{pa 2304 6956}%
\special{ip}%
\special{pa 2325 6925}%
\special{pa 2329 6918}%
\special{ip}%
\special{pa 2349 6886}%
\special{pa 2353 6879}%
\special{ip}%
\special{pa 2372 6848}%
\special{pa 2375 6843}%
\special{pa 2376 6841}%
\special{ip}%
\special{pa 2395 6809}%
\special{pa 2399 6802}%
\special{ip}%
\special{pa 2417 6769}%
\special{pa 2421 6762}%
\special{ip}%
\special{pa 2439 6730}%
\special{pa 2443 6723}%
\special{ip}%
\special{pa 2460 6690}%
\special{pa 2464 6682}%
\special{ip}%
\special{pa 2481 6649}%
\special{pa 2485 6642}%
\special{ip}%
\special{pa 2502 6609}%
\special{pa 2505 6602}%
\special{ip}%
\special{pa 2522 6568}%
\special{pa 2525 6562}%
\special{pa 2525 6561}%
\special{ip}%
\special{pa 2541 6528}%
\special{pa 2545 6520}%
\special{ip}%
\special{pa 2561 6487}%
\special{pa 2564 6479}%
\special{ip}%
\special{pa 2579 6445}%
\special{pa 2583 6438}%
\special{ip}%
\special{pa 2598 6404}%
\special{pa 2601 6397}%
\special{ip}%
\special{pa 2616 6363}%
\special{pa 2619 6355}%
\special{ip}%
\special{pa 2634 6321}%
\special{pa 2635 6320}%
\special{pa 2638 6314}%
\special{ip}%
\special{pa 2652 6280}%
\special{pa 2655 6272}%
\special{ip}%
\special{pa 2669 6238}%
\special{pa 2672 6230}%
\special{ip}%
\special{pa 2687 6196}%
\special{pa 2689 6188}%
\special{ip}%
\special{pa 2703 6154}%
\special{pa 2705 6150}%
\special{pa 2706 6146}%
\special{ip}%
\special{pa 2720 6112}%
\special{pa 2723 6104}%
\special{ip}%
\special{pa 2736 6070}%
\special{pa 2739 6062}%
\special{ip}%
\special{pa 2753 6027}%
\special{pa 2755 6020}%
\special{ip}%
\special{pa 2769 5985}%
\special{pa 2770 5981}%
\special{pa 2771 5977}%
\special{ip}%
\special{pa 2784 5943}%
\special{pa 2785 5941}%
\special{pa 2787 5935}%
\special{ip}%
\special{pa 2800 5900}%
\special{pa 2800 5900}%
\special{ip}%
\special{pa 2800 5900}%
\special{pa 2845 5774}%
\special{pa 2850 5759}%
\special{pa 2855 5745}%
\special{pa 2860 5730}%
\special{pa 2865 5716}%
\special{pa 2870 5701}%
\special{pa 2875 5687}%
\special{pa 2925 5537}%
\special{pa 2930 5521}%
\special{pa 2935 5506}%
\special{pa 2940 5490}%
\special{pa 2945 5475}%
\special{pa 2950 5459}%
\special{pa 2955 5444}%
\special{pa 3005 5284}%
\special{pa 3010 5267}%
\special{pa 3015 5251}%
\special{pa 3020 5234}%
\special{pa 3025 5218}%
\special{pa 3030 5201}%
\special{pa 3035 5185}%
\special{pa 3085 5015}%
\special{pa 3090 4997}%
\special{pa 3095 4980}%
\special{pa 3100 4962}%
\special{pa 3105 4945}%
\special{pa 3110 4927}%
\special{pa 3115 4910}%
\special{pa 3165 4730}%
\special{pa 3170 4711}%
\special{pa 3175 4693}%
\special{pa 3180 4674}%
\special{pa 3185 4656}%
\special{pa 3190 4637}%
\special{pa 3195 4619}%
\special{pa 3245 4429}%
\special{pa 3250 4409}%
\special{pa 3255 4390}%
\special{pa 3260 4370}%
\special{pa 3265 4351}%
\special{pa 3270 4331}%
\special{pa 3275 4312}%
\special{pa 3325 4112}%
\special{pa 3330 4091}%
\special{pa 3340 4051}%
\special{pa 3350 4009}%
\special{pa 3355 3989}%
\special{pa 3405 3779}%
\special{pa 3410 3757}%
\special{pa 3420 3715}%
\special{pa 3430 3671}%
\special{pa 3435 3650}%
\special{pa 3485 3430}%
\special{pa 3490 3407}%
\special{pa 3495 3385}%
\special{pa 3500 3362}%
\special{pa 3505 3340}%
\special{pa 3510 3317}%
\special{pa 3515 3295}%
\special{pa 3535 3203}%
\special{pa 3540 3181}%
\special{pa 3545 3157}%
\special{pa 3565 3065}%
\special{pa 3570 3041}%
\special{pa 3575 3018}%
\special{pa 3580 2994}%
\special{pa 3585 2971}%
\special{pa 3590 2947}%
\special{pa 3595 2924}%
\special{pa 3645 2684}%
\special{pa 3650 2659}%
\special{pa 3655 2635}%
\special{pa 3660 2610}%
\special{pa 3665 2586}%
\special{pa 3670 2561}%
\special{pa 3675 2537}%
\special{pa 3700 2412}%
\special{pa 3702 2400}%
\special{fp}%
\special{pn 8}%
\special{pn 8}%
\special{pa 1600 7400}%
\special{pa 1605 7394}%
\special{ip}%
\special{pa 1628 7364}%
\special{pa 1630 7362}%
\special{pa 1633 7358}%
\special{ip}%
\special{pa 1657 7329}%
\special{pa 1662 7322}%
\special{ip}%
\special{pa 1686 7293}%
\special{pa 1690 7288}%
\special{pa 1691 7287}%
\special{ip}%
\special{pa 1714 7257}%
\special{pa 1719 7251}%
\special{ip}%
\special{pa 1743 7221}%
\special{pa 1745 7219}%
\special{pa 1748 7215}%
\special{ip}%
\special{pa 1772 7186}%
\special{pa 1775 7181}%
\special{pa 1776 7179}%
\special{ip}%
\special{pa 1800 7150}%
\special{pa 1805 7144}%
\special{ip}%
\special{pa 1828 7114}%
\special{pa 1830 7112}%
\special{pa 1833 7108}%
\special{ip}%
\special{pa 1857 7079}%
\special{pa 1862 7072}%
\special{ip}%
\special{pa 1886 7043}%
\special{pa 1890 7038}%
\special{pa 1891 7037}%
\special{ip}%
\special{pa 1914 7007}%
\special{pa 1919 7001}%
\special{ip}%
\special{pa 1943 6971}%
\special{pa 1945 6969}%
\special{pa 1948 6965}%
\special{ip}%
\special{pa 1972 6936}%
\special{pa 1975 6931}%
\special{pa 1976 6929}%
\special{ip}%
\special{pa 2000 6900}%
\special{pa 2005 6894}%
\special{ip}%
\special{pa 2028 6864}%
\special{pa 2030 6862}%
\special{pa 2033 6858}%
\special{ip}%
\special{pa 2057 6829}%
\special{pa 2062 6822}%
\special{ip}%
\special{pa 2086 6793}%
\special{pa 2090 6788}%
\special{pa 2091 6787}%
\special{ip}%
\special{pa 2114 6757}%
\special{pa 2119 6751}%
\special{ip}%
\special{pa 2143 6721}%
\special{pa 2145 6719}%
\special{pa 2148 6715}%
\special{ip}%
\special{pa 2172 6686}%
\special{pa 2175 6681}%
\special{pa 2176 6679}%
\special{ip}%
\special{pa 2200 6650}%
\special{pa 2205 6644}%
\special{ip}%
\special{pa 2228 6614}%
\special{pa 2230 6612}%
\special{pa 2233 6608}%
\special{ip}%
\special{pa 2257 6579}%
\special{pa 2262 6572}%
\special{ip}%
\special{pa 2286 6543}%
\special{pa 2290 6538}%
\special{pa 2291 6537}%
\special{ip}%
\special{pa 2314 6507}%
\special{pa 2319 6501}%
\special{ip}%
\special{pa 2343 6471}%
\special{pa 2345 6469}%
\special{pa 2348 6465}%
\special{ip}%
\special{pa 2372 6436}%
\special{pa 2375 6431}%
\special{pa 2376 6429}%
\special{ip}%
\special{pa 2400 6400}%
\special{pa 2405 6394}%
\special{ip}%
\special{pa 2428 6364}%
\special{pa 2430 6362}%
\special{pa 2433 6358}%
\special{ip}%
\special{pa 2457 6329}%
\special{pa 2462 6322}%
\special{ip}%
\special{pa 2486 6293}%
\special{pa 2490 6288}%
\special{pa 2491 6287}%
\special{ip}%
\special{pa 2514 6257}%
\special{pa 2519 6251}%
\special{ip}%
\special{pa 2543 6221}%
\special{pa 2545 6219}%
\special{pa 2548 6215}%
\special{ip}%
\special{pa 2572 6186}%
\special{pa 2575 6181}%
\special{pa 2576 6179}%
\special{ip}%
\special{pa 2600 6150}%
\special{pa 2605 6144}%
\special{ip}%
\special{pa 2628 6114}%
\special{pa 2630 6112}%
\special{pa 2633 6108}%
\special{ip}%
\special{pa 2657 6079}%
\special{pa 2662 6072}%
\special{ip}%
\special{pa 2686 6043}%
\special{pa 2690 6038}%
\special{pa 2691 6037}%
\special{ip}%
\special{pa 2714 6007}%
\special{pa 2719 6001}%
\special{ip}%
\special{pa 2743 5971}%
\special{pa 2745 5969}%
\special{pa 2748 5965}%
\special{ip}%
\special{pa 2772 5936}%
\special{pa 2775 5931}%
\special{pa 2776 5929}%
\special{ip}%
\special{pa 2800 5900}%
\special{pa 2800 5900}%
\special{ip}%
\special{pa 2800 5900}%
\special{pa 2810 5888}%
\special{pa 2815 5881}%
\special{pa 2825 5869}%
\special{pa 2830 5862}%
\special{pa 2850 5838}%
\special{pa 2855 5831}%
\special{pa 2865 5819}%
\special{pa 2870 5812}%
\special{pa 2890 5788}%
\special{pa 2895 5781}%
\special{pa 2905 5769}%
\special{pa 2910 5762}%
\special{pa 2930 5738}%
\special{pa 2935 5731}%
\special{pa 2945 5719}%
\special{pa 2950 5712}%
\special{pa 2970 5688}%
\special{pa 2975 5681}%
\special{pa 2985 5669}%
\special{pa 2990 5662}%
\special{pa 3010 5638}%
\special{pa 3015 5631}%
\special{pa 3025 5619}%
\special{pa 3030 5612}%
\special{pa 3050 5588}%
\special{pa 3055 5581}%
\special{pa 3065 5569}%
\special{pa 3070 5562}%
\special{pa 3090 5538}%
\special{pa 3095 5531}%
\special{pa 3105 5519}%
\special{pa 3110 5512}%
\special{pa 3130 5488}%
\special{pa 3135 5481}%
\special{pa 3145 5469}%
\special{pa 3150 5462}%
\special{pa 3170 5438}%
\special{pa 3175 5431}%
\special{pa 3185 5419}%
\special{pa 3190 5412}%
\special{pa 3210 5388}%
\special{pa 3215 5381}%
\special{pa 3225 5369}%
\special{pa 3230 5362}%
\special{pa 3250 5338}%
\special{pa 3255 5331}%
\special{pa 3265 5319}%
\special{pa 3270 5312}%
\special{pa 3290 5288}%
\special{pa 3295 5281}%
\special{pa 3305 5269}%
\special{pa 3310 5262}%
\special{pa 3330 5238}%
\special{pa 3335 5231}%
\special{pa 3345 5219}%
\special{pa 3350 5212}%
\special{pa 3370 5188}%
\special{pa 3375 5181}%
\special{pa 3385 5169}%
\special{pa 3390 5162}%
\special{pa 3410 5138}%
\special{pa 3415 5131}%
\special{pa 3425 5119}%
\special{pa 3430 5112}%
\special{pa 3450 5088}%
\special{pa 3455 5081}%
\special{pa 3465 5069}%
\special{pa 3470 5062}%
\special{pa 3490 5038}%
\special{pa 3495 5031}%
\special{pa 3505 5019}%
\special{pa 3510 5012}%
\special{pa 3530 4988}%
\special{pa 3535 4981}%
\special{pa 3545 4969}%
\special{pa 3550 4962}%
\special{pa 3570 4938}%
\special{pa 3575 4931}%
\special{pa 3585 4919}%
\special{pa 3590 4912}%
\special{pa 3610 4888}%
\special{pa 3615 4881}%
\special{pa 3625 4869}%
\special{pa 3630 4862}%
\special{pa 3650 4838}%
\special{pa 3655 4831}%
\special{pa 3665 4819}%
\special{pa 3670 4812}%
\special{pa 3690 4788}%
\special{pa 3695 4781}%
\special{pa 3705 4769}%
\special{pa 3710 4762}%
\special{pa 3730 4738}%
\special{pa 3735 4731}%
\special{pa 3745 4719}%
\special{pa 3750 4712}%
\special{pa 3770 4688}%
\special{pa 3775 4681}%
\special{pa 3785 4669}%
\special{pa 3790 4662}%
\special{pa 3810 4638}%
\special{pa 3815 4631}%
\special{pa 3825 4619}%
\special{pa 3830 4612}%
\special{pa 3850 4588}%
\special{pa 3855 4581}%
\special{pa 3865 4569}%
\special{pa 3870 4562}%
\special{pa 3890 4538}%
\special{pa 3895 4531}%
\special{pa 3905 4519}%
\special{pa 3910 4512}%
\special{pa 3930 4488}%
\special{pa 3935 4481}%
\special{pa 3945 4469}%
\special{pa 3950 4462}%
\special{pa 3970 4438}%
\special{pa 3975 4431}%
\special{pa 3985 4419}%
\special{pa 3990 4412}%
\special{pa 4010 4388}%
\special{pa 4015 4381}%
\special{pa 4025 4369}%
\special{pa 4030 4362}%
\special{pa 4050 4338}%
\special{pa 4055 4331}%
\special{pa 4065 4319}%
\special{pa 4070 4312}%
\special{pa 4090 4288}%
\special{pa 4095 4281}%
\special{pa 4105 4269}%
\special{pa 4110 4262}%
\special{pa 4130 4238}%
\special{pa 4135 4231}%
\special{pa 4145 4219}%
\special{pa 4150 4212}%
\special{pa 4170 4188}%
\special{pa 4175 4181}%
\special{pa 4185 4169}%
\special{pa 4190 4162}%
\special{pa 4210 4138}%
\special{pa 4215 4131}%
\special{pa 4225 4119}%
\special{pa 4230 4112}%
\special{pa 4250 4088}%
\special{pa 4255 4081}%
\special{pa 4265 4069}%
\special{pa 4270 4062}%
\special{pa 4290 4038}%
\special{pa 4295 4031}%
\special{pa 4305 4019}%
\special{pa 4310 4012}%
\special{pa 4330 3988}%
\special{pa 4335 3981}%
\special{pa 4345 3969}%
\special{pa 4350 3962}%
\special{pa 4370 3938}%
\special{pa 4375 3931}%
\special{pa 4385 3919}%
\special{pa 4390 3912}%
\special{pa 4410 3888}%
\special{pa 4415 3881}%
\special{pa 4425 3869}%
\special{pa 4430 3862}%
\special{pa 4450 3838}%
\special{pa 4455 3831}%
\special{pa 4465 3819}%
\special{pa 4470 3812}%
\special{pa 4490 3788}%
\special{pa 4495 3781}%
\special{pa 4505 3769}%
\special{pa 4510 3762}%
\special{pa 4530 3738}%
\special{pa 4535 3731}%
\special{pa 4545 3719}%
\special{pa 4550 3712}%
\special{pa 4570 3688}%
\special{pa 4575 3681}%
\special{pa 4585 3669}%
\special{pa 4590 3662}%
\special{pa 4610 3638}%
\special{pa 4615 3631}%
\special{pa 4625 3619}%
\special{pa 4630 3612}%
\special{pa 4650 3588}%
\special{pa 4655 3581}%
\special{pa 4665 3569}%
\special{pa 4670 3562}%
\special{pa 4690 3538}%
\special{pa 4695 3531}%
\special{pa 4705 3519}%
\special{pa 4710 3512}%
\special{pa 4730 3488}%
\special{pa 4735 3481}%
\special{pa 4745 3469}%
\special{pa 4750 3462}%
\special{pa 4770 3438}%
\special{pa 4775 3431}%
\special{pa 4785 3419}%
\special{pa 4790 3412}%
\special{pa 4810 3388}%
\special{pa 4815 3381}%
\special{pa 4825 3369}%
\special{pa 4830 3362}%
\special{pa 4850 3338}%
\special{pa 4855 3331}%
\special{pa 4865 3319}%
\special{pa 4870 3312}%
\special{pa 4890 3288}%
\special{pa 4895 3281}%
\special{pa 4905 3269}%
\special{pa 4910 3262}%
\special{pa 4930 3238}%
\special{pa 4935 3231}%
\special{pa 4945 3219}%
\special{pa 4950 3212}%
\special{pa 4970 3188}%
\special{pa 4975 3181}%
\special{pa 4985 3169}%
\special{pa 4990 3162}%
\special{pa 5010 3138}%
\special{pa 5015 3131}%
\special{pa 5025 3119}%
\special{pa 5030 3112}%
\special{pa 5050 3088}%
\special{pa 5055 3081}%
\special{pa 5065 3069}%
\special{pa 5070 3062}%
\special{pa 5090 3038}%
\special{pa 5095 3031}%
\special{pa 5105 3019}%
\special{pa 5110 3012}%
\special{pa 5130 2988}%
\special{pa 5135 2981}%
\special{pa 5145 2969}%
\special{pa 5150 2962}%
\special{pa 5170 2938}%
\special{pa 5175 2931}%
\special{pa 5185 2919}%
\special{pa 5190 2912}%
\special{pa 5210 2888}%
\special{pa 5215 2881}%
\special{pa 5225 2869}%
\special{pa 5230 2862}%
\special{pa 5250 2838}%
\special{pa 5255 2831}%
\special{pa 5265 2819}%
\special{pa 5270 2812}%
\special{pa 5290 2788}%
\special{pa 5295 2781}%
\special{pa 5305 2769}%
\special{pa 5310 2762}%
\special{pa 5330 2738}%
\special{pa 5335 2731}%
\special{pa 5345 2719}%
\special{pa 5350 2712}%
\special{pa 5370 2688}%
\special{pa 5375 2681}%
\special{pa 5385 2669}%
\special{pa 5390 2662}%
\special{pa 5410 2638}%
\special{pa 5415 2631}%
\special{pa 5425 2619}%
\special{pa 5430 2612}%
\special{pa 5450 2588}%
\special{pa 5455 2581}%
\special{pa 5465 2569}%
\special{pa 5470 2562}%
\special{pa 5490 2538}%
\special{pa 5495 2531}%
\special{pa 5505 2519}%
\special{pa 5510 2512}%
\special{pa 5530 2488}%
\special{pa 5535 2481}%
\special{pa 5545 2469}%
\special{pa 5550 2462}%
\special{pa 5570 2438}%
\special{pa 5575 2431}%
\special{pa 5585 2419}%
\special{pa 5590 2412}%
\special{pa 5600 2400}%
\special{fp}%
\put(28.0000,-66.0000){\makebox(0,0){$6$}}%
\put(36.0000,-66.0000){\makebox(0,0){$10$}}%
\put(46.0000,-66.0000){\makebox(0,0){$15$}}%
%
\special{pn 4}%
\special{sh 1}%
\special{ar 3000 5300 16 16 0 6.2831853}%
\special{sh 1}%
\special{ar 3000 5500 16 16 0 6.2831853}%
\special{sh 1}%
\special{ar 3000 5400 16 16 0 6.2831853}%
%
\special{pn 4}%
\special{sh 1}%
\special{ar 2800 5900 16 16 0 6.2831853}%
\put(20.0000,-54.0000){\makebox(0,0){$20$}}%
\put(20.0000,-44.0000){\makebox(0,0){$30$}}%
\put(20.0000,-34.0000){\makebox(0,0){$40$}}%
%
\special{pn 4}%
\special{sh 1}%
\special{ar 3200 5200 16 16 0 6.2831853}%
\special{sh 1}%
\special{ar 3200 5100 16 16 0 6.2831853}%
\special{sh 1}%
\special{ar 3200 5000 16 16 0 6.2831853}%
\special{sh 1}%
\special{ar 3200 4900 16 16 0 6.2831853}%
\special{sh 1}%
\special{ar 3200 4800 16 16 0 6.2831853}%
\special{sh 1}%
\special{ar 3200 4700 16 16 0 6.2831853}%
\special{sh 1}%
\special{ar 3200 4600 16 16 0 6.2831853}%
%
\special{pn 4}%
\special{sh 1}%
\special{ar 3400 5000 16 16 0 6.2831853}%
\special{sh 1}%
\special{ar 3400 4800 16 16 0 6.2831853}%
\special{sh 1}%
\special{ar 3400 4600 16 16 0 6.2831853}%
\special{sh 1}%
\special{ar 3400 4400 16 16 0 6.2831853}%
\special{sh 1}%
\special{ar 3400 4200 16 16 0 6.2831853}%
\special{sh 1}%
\special{ar 3400 4000 16 16 0 6.2831853}%
\special{sh 1}%
\special{ar 3400 3800 16 16 0 6.2831853}%
\special{sh 1}%
\special{ar 3400 3900 16 16 0 6.2831853}%
\special{sh 1}%
\special{ar 3400 4100 16 16 0 6.2831853}%
\special{sh 1}%
\special{ar 3400 4300 16 16 0 6.2831853}%
\special{sh 1}%
\special{ar 3400 4500 16 16 0 6.2831853}%
\special{sh 1}%
\special{ar 3400 4700 16 16 0 6.2831853}%
\special{sh 1}%
\special{ar 3400 4100 16 16 0 6.2831853}%
%
\special{pn 4}%
\special{sh 1}%
\special{ar 3600 4900 16 16 0 6.2831853}%
\special{sh 1}%
\special{ar 3600 4700 16 16 0 6.2831853}%
\special{sh 1}%
\special{ar 3600 4500 16 16 0 6.2831853}%
\special{sh 1}%
\special{ar 3600 4300 16 16 0 6.2831853}%
\special{sh 1}%
\special{ar 3600 4100 16 16 0 6.2831853}%
\special{sh 1}%
\special{ar 3600 3900 16 16 0 6.2831853}%
\special{sh 1}%
\special{ar 3600 3700 16 16 0 6.2831853}%
\special{sh 1}%
\special{ar 3600 3500 16 16 0 6.2831853}%
\special{sh 1}%
\special{ar 3600 3300 16 16 0 6.2831853}%
\special{sh 1}%
\special{ar 3600 3100 16 16 0 6.2831853}%
\special{sh 1}%
\special{ar 3600 2900 16 16 0 6.2831853}%
\special{sh 1}%
\special{ar 3600 4600 16 16 0 6.2831853}%
\special{sh 1}%
\special{ar 3600 4400 16 16 0 6.2831853}%
\special{sh 1}%
\special{ar 3600 4200 16 16 0 6.2831853}%
\special{sh 1}%
\special{ar 3600 4000 16 16 0 6.2831853}%
\special{sh 1}%
\special{ar 3600 3800 16 16 0 6.2831853}%
\special{sh 1}%
\special{ar 3600 3600 16 16 0 6.2831853}%
\special{sh 1}%
\special{ar 3600 3400 16 16 0 6.2831853}%
\special{sh 1}%
\special{ar 3600 3200 16 16 0 6.2831853}%
\special{sh 1}%
\special{ar 3600 3000 16 16 0 6.2831853}%
%
\special{pn 4}%
\special{sh 1}%
\special{ar 3800 4400 16 16 0 6.2831853}%
\special{sh 1}%
\special{ar 3800 4200 16 16 0 6.2831853}%
\special{sh 1}%
\special{ar 3800 4000 16 16 0 6.2831853}%
\special{sh 1}%
\special{ar 3800 3800 16 16 0 6.2831853}%
\special{sh 1}%
\special{ar 3800 3600 16 16 0 6.2831853}%
\special{sh 1}%
\special{ar 3800 3400 16 16 0 6.2831853}%
\special{sh 1}%
\special{ar 3800 3200 16 16 0 6.2831853}%
\special{sh 1}%
\special{ar 3800 3000 16 16 0 6.2831853}%
\special{sh 1}%
\special{ar 3800 2800 16 16 0 6.2831853}%
\special{sh 1}%
\special{ar 3800 2600 16 16 0 6.2831853}%
\special{sh 1}%
\special{ar 3800 2400 16 16 0 6.2831853}%
\special{sh 1}%
\special{ar 3800 2500 16 16 0 6.2831853}%
\special{sh 1}%
\special{ar 3800 2700 16 16 0 6.2831853}%
\special{sh 1}%
\special{ar 3800 2900 16 16 0 6.2831853}%
\special{sh 1}%
\special{ar 3800 3100 16 16 0 6.2831853}%
\special{sh 1}%
\special{ar 3800 3300 16 16 0 6.2831853}%
\special{sh 1}%
\special{ar 3800 3500 16 16 0 6.2831853}%
\special{sh 1}%
\special{ar 3800 3500 16 16 0 6.2831853}%
\special{sh 1}%
\special{ar 3800 3700 16 16 0 6.2831853}%
\special{sh 1}%
\special{ar 3800 3900 16 16 0 6.2831853}%
\special{sh 1}%
\special{ar 3800 4100 16 16 0 6.2831853}%
\special{sh 1}%
\special{ar 3800 4300 16 16 0 6.2831853}%
\special{sh 1}%
\special{ar 3800 4500 16 16 0 6.2831853}%
%
\special{pn 4}%
\special{sh 1}%
\special{ar 4000 4400 16 16 0 6.2831853}%
\special{sh 1}%
\special{ar 4000 4200 16 16 0 6.2831853}%
\special{sh 1}%
\special{ar 4000 4000 16 16 0 6.2831853}%
\special{sh 1}%
\special{ar 4000 3800 16 16 0 6.2831853}%
\special{sh 1}%
\special{ar 4000 3600 16 16 0 6.2831853}%
\special{sh 1}%
\special{ar 4000 3400 16 16 0 6.2831853}%
\special{sh 1}%
\special{ar 4000 3200 16 16 0 6.2831853}%
\special{sh 1}%
\special{ar 4000 3000 16 16 0 6.2831853}%
\special{sh 1}%
\special{ar 4000 2800 16 16 0 6.2831853}%
\special{sh 1}%
\special{ar 4000 2600 16 16 0 6.2831853}%
\special{sh 1}%
\special{ar 4000 2400 16 16 0 6.2831853}%
\special{sh 1}%
\special{ar 4000 4100 16 16 0 6.2831853}%
\special{sh 1}%
\special{ar 4000 3900 16 16 0 6.2831853}%
\special{sh 1}%
\special{ar 4000 3700 16 16 0 6.2831853}%
\special{sh 1}%
\special{ar 4000 3500 16 16 0 6.2831853}%
\special{sh 1}%
\special{ar 4000 3300 16 16 0 6.2831853}%
\special{sh 1}%
\special{ar 4000 3100 16 16 0 6.2831853}%
\special{sh 1}%
\special{ar 4000 2900 16 16 0 6.2831853}%
\special{sh 1}%
\special{ar 4000 2700 16 16 0 6.2831853}%
\special{sh 1}%
\special{ar 4000 2500 16 16 0 6.2831853}%
%
\special{pn 4}%
\special{sh 1}%
\special{ar 4200 4000 16 16 0 6.2831853}%
\special{sh 1}%
\special{ar 4200 3800 16 16 0 6.2831853}%
\special{sh 1}%
\special{ar 4200 3600 16 16 0 6.2831853}%
\special{sh 1}%
\special{ar 4200 3400 16 16 0 6.2831853}%
\special{sh 1}%
\special{ar 4200 3200 16 16 0 6.2831853}%
\special{sh 1}%
\special{ar 4200 3000 16 16 0 6.2831853}%
\special{sh 1}%
\special{ar 4200 2800 16 16 0 6.2831853}%
\special{sh 1}%
\special{ar 4200 2600 16 16 0 6.2831853}%
\special{sh 1}%
\special{ar 4200 2400 16 16 0 6.2831853}%
\special{sh 1}%
\special{ar 4200 3700 16 16 0 6.2831853}%
\special{sh 1}%
\special{ar 4200 3500 16 16 0 6.2831853}%
\special{sh 1}%
\special{ar 4200 3300 16 16 0 6.2831853}%
\special{sh 1}%
\special{ar 4200 3100 16 16 0 6.2831853}%
\special{sh 1}%
\special{ar 4200 2900 16 16 0 6.2831853}%
\special{sh 1}%
\special{ar 4200 2700 16 16 0 6.2831853}%
\special{sh 1}%
\special{ar 4200 2500 16 16 0 6.2831853}%
%
\special{pn 4}%
\special{sh 1}%
\special{ar 4400 3700 16 16 0 6.2831853}%
\special{sh 1}%
\special{ar 4400 3500 16 16 0 6.2831853}%
\special{sh 1}%
\special{ar 4400 3300 16 16 0 6.2831853}%
\special{sh 1}%
\special{ar 4400 3100 16 16 0 6.2831853}%
\special{sh 1}%
\special{ar 4400 2900 16 16 0 6.2831853}%
\special{sh 1}%
\special{ar 4400 2700 16 16 0 6.2831853}%
\special{sh 1}%
\special{ar 4400 2500 16 16 0 6.2831853}%
\special{sh 1}%
\special{ar 4400 3600 16 16 0 6.2831853}%
\special{sh 1}%
\special{ar 4400 3400 16 16 0 6.2831853}%
\special{sh 1}%
\special{ar 4400 3200 16 16 0 6.2831853}%
\special{sh 1}%
\special{ar 4400 3000 16 16 0 6.2831853}%
\special{sh 1}%
\special{ar 4400 2800 16 16 0 6.2831853}%
\special{sh 1}%
\special{ar 4400 2600 16 16 0 6.2831853}%
\special{sh 1}%
\special{ar 4400 2400 16 16 0 6.2831853}%
%
\special{pn 4}%
\special{sh 1}%
\special{ar 4400 3900 16 16 0 6.2831853}%
\special{sh 1}%
\special{ar 4600 3500 16 16 0 6.2831853}%
\special{sh 1}%
\special{ar 4600 3300 16 16 0 6.2831853}%
\special{sh 1}%
\special{ar 4600 3100 16 16 0 6.2831853}%
\special{sh 1}%
\special{ar 4600 2900 16 16 0 6.2831853}%
\special{sh 1}%
\special{ar 4600 2700 16 16 0 6.2831853}%
\special{sh 1}%
\special{ar 4600 2500 16 16 0 6.2831853}%
\special{sh 1}%
\special{ar 4600 3400 16 16 0 6.2831853}%
\special{sh 1}%
\special{ar 4600 3200 16 16 0 6.2831853}%
\special{sh 1}%
\special{ar 4600 3000 16 16 0 6.2831853}%
\special{sh 1}%
\special{ar 4600 2800 16 16 0 6.2831853}%
\special{sh 1}%
\special{ar 4600 2600 16 16 0 6.2831853}%
\special{sh 1}%
\special{ar 4600 2400 16 16 0 6.2831853}%
%
\special{pn 4}%
\special{sh 1}%
\special{ar 4800 3200 16 16 0 6.2831853}%
\special{sh 1}%
\special{ar 4800 3000 16 16 0 6.2831853}%
\special{sh 1}%
\special{ar 4800 2800 16 16 0 6.2831853}%
\special{sh 1}%
\special{ar 4800 2600 16 16 0 6.2831853}%
\special{sh 1}%
\special{ar 4800 2400 16 16 0 6.2831853}%
\special{sh 1}%
\special{ar 4800 3100 16 16 0 6.2831853}%
\special{sh 1}%
\special{ar 4800 2900 16 16 0 6.2831853}%
\special{sh 1}%
\special{ar 4800 2700 16 16 0 6.2831853}%
\special{sh 1}%
\special{ar 4800 2500 16 16 0 6.2831853}%
\special{sh 1}%
\special{ar 4800 2500 16 16 0 6.2831853}%
%
\special{pn 4}%
\special{sh 1}%
\special{ar 5000 2600 16 16 0 6.2831853}%
\special{sh 1}%
\special{ar 5200 2600 16 16 0 6.2831853}%
\special{sh 1}%
\special{ar 5000 2800 16 16 0 6.2831853}%
\special{sh 1}%
\special{ar 5000 2400 16 16 0 6.2831853}%
\special{sh 1}%
\special{ar 5200 2400 16 16 0 6.2831853}%
%
\special{pn 4}%
\special{sh 1}%
\special{ar 5000 3000 16 16 0 6.2831853}%
\special{sh 1}%
\special{ar 5000 2900 16 16 0 6.2831853}%
\special{sh 1}%
\special{ar 5000 2700 16 16 0 6.2831853}%
\special{sh 1}%
\special{ar 5000 2500 16 16 0 6.2831853}%
\special{sh 1}%
\special{ar 5200 2500 16 16 0 6.2831853}%
\special{sh 1}%
\special{ar 5200 2700 16 16 0 6.2831853}%
\special{sh 1}%
\special{ar 5200 2900 16 16 0 6.2831853}%
%
\special{pn 8}%
\special{ar 3000 5600 20 20 0.0000000 6.2831853}%
%
\special{pn 8}%
\special{ar 3200 5300 20 20 0.0000000 6.2831853}%
%
\special{pn 8}%
\special{ar 3400 5100 20 20 0.0000000 6.2831853}%
%
\special{pn 8}%
\special{ar 3600 4800 20 20 0.0000000 6.2831853}%
%
\special{pn 8}%
\special{ar 3800 4600 20 20 0.0000000 6.2831853}%
%
\special{pn 8}%
\special{ar 4000 4300 20 20 0.0000000 6.2831853}%
%
\special{pn 8}%
\special{ar 4200 4100 20 20 0.0000000 6.2831853}%
%
\special{pn 8}%
\special{ar 4400 3800 20 20 0.0000000 6.2831853}%
%
\special{pn 8}%
\special{ar 4600 3600 20 20 0.0000000 6.2831853}%
%
\special{pn 8}%
\special{ar 4800 3300 20 20 0.0000000 6.2831853}%
%
\special{pn 8}%
\special{ar 5000 3100 20 20 0.0000000 6.2831853}%
%
\special{pn 8}%
\special{ar 5200 2800 20 20 0.0000000 6.2831853}%
%
\special{pn 8}%
\special{ar 5400 2600 20 20 0.0000000 6.2831853}%
%
\special{pn 4}%
\special{sh 1}%
\special{ar 5400 2400 16 16 0 6.2831853}%
\special{sh 1}%
\special{ar 5600 2400 16 16 0 6.2831853}%
\special{sh 1}%
\special{ar 5400 2500 16 16 0 6.2831853}%
\put(20.0000,-24.0000){\makebox(0,0){$e$}}%
\put(56.0000,-65.0000){\makebox(0,0){$v$}}%
%
\special{pn 8}%
\special{pa 2200 6600}%
\special{pa 2200 2400}%
\special{fp}%
\special{sh 1}%
\special{pa 2200 2400}%
\special{pa 2180 2467}%
\special{pa 2200 2453}%
\special{pa 2220 2467}%
\special{pa 2200 2400}%
\special{fp}%
\put(57.1000,-22.5000){\makebox(0,0){$e=\frac 5 2 v$}}%
\put(37.0000,-22.5000){\makebox(0,0){$e=\frac 1 2 v(v-1)$}}%
%
\special{pn 8}%
\special{pa 2150 5900}%
\special{pa 2250 5900}%
\special{fp}%
\special{pa 2250 4900}%
\special{pa 2150 4900}%
\special{fp}%
\special{pa 2150 3900}%
\special{pa 2250 3900}%
\special{fp}%
\special{pa 2250 2900}%
\special{pa 2150 2900}%
\special{fp}%
%
\special{pn 8}%
\special{pa 2150 3400}%
\special{pa 2250 3400}%
\special{fp}%
\special{pa 2250 4400}%
\special{pa 2150 4400}%
\special{fp}%
\special{pa 2150 5400}%
\special{pa 2250 5400}%
\special{fp}%
%
\special{pn 8}%
\special{pa 3600 6450}%
\special{pa 3600 6350}%
\special{fp}%
\special{pa 2800 6350}%
\special{pa 2800 6450}%
\special{fp}%
\special{pa 4600 6450}%
\special{pa 4600 6350}%
\special{fp}%
%
\special{pn 4}%
\special{sh 1}%
\special{ar 4800 3400 16 16 0 6.2831853}%
\special{sh 1}%
\special{ar 4800 3400 16 16 0 6.2831853}%
%
\special{pn 8}%
\special{pa 3170 5420}%
\special{pa 3230 5420}%
\special{fp}%
\special{pa 3230 5420}%
\special{pa 3200 5370}%
\special{fp}%
\special{pa 3200 5370}%
\special{pa 3170 5420}%
\special{fp}%
%
\special{pn 8}%
\special{pa 4170 3920}%
\special{pa 4230 3920}%
\special{fp}%
\special{pa 4230 3920}%
\special{pa 4200 3870}%
\special{fp}%
\special{pa 4200 3870}%
\special{pa 4170 3920}%
\special{fp}%
%
\special{pn 8}%
\special{pa 3370 4920}%
\special{pa 3430 4920}%
\special{fp}%
\special{pa 3430 4920}%
\special{pa 3400 4870}%
\special{fp}%
\special{pa 3400 4870}%
\special{pa 3370 4920}%
\special{fp}%
%
\special{pn 8}%
\special{ar 4850 5220 20 20 0.0000000 6.2831853}%
%
\special{pn 8}%
\special{pa 4820 5440}%
\special{pa 4880 5440}%
\special{fp}%
\special{pa 4880 5440}%
\special{pa 4850 5390}%
\special{fp}%
\special{pa 4850 5390}%
\special{pa 4820 5440}%
\special{fp}%
\put(53.8000,-52.0000){\makebox(0,0){points in $L$}}%
\put(53.8000,-54.0000){\makebox(0,0){points in $G$}}%
\end{picture}}%
\center{Table 1: Table of $\mathcal E^5$}
\end{figure}

In the table, black dots represent points in $\mathcal E^5$, white circles and triangles represent points in $L$ and $G$ respectively.
For example, on the line $v=9$, $(9,23) \in L$ is presented by a white circle,
$(9,25)\in G$ is presented by a triangle,
and the possible numbers of edges are $24,26,27,\dots,36$. 

Theorem \ref{mainthm} was also independently proved by Pineda-Villavicencio, Ugon and Yost \cite{PUY2} by a different method.

It would be interesting to determine $\mathcal E^d$ for $d \geq 6$, and more generally to characterize the set $\{(f_i(P),f_j(P)):\mbox{$P$ is a $d$-polytope}\}$ for any  $0 \leq i<j <d$.
About the latter problem,
Sj\"oberg and Ziegler \cite{SZ} recently study the case when $i=0$ and $j=d-1$.

\section{sufficiency}
In this section, we prove the sufficiency part of Theorem \ref{mainthm}.
If a polytope $Q$ is the pyramid over a polytope $P$, then we have
\[f_0(Q)=f_0(P)+1 \mbox{ and }f_1(Q)=f_0(P)+f_1(P).\]
This simple fact and Theorem \ref{4poly} prove the next lemma.

\begin{lemma}
\label{2.1}
\[ \mathcal E^5 \supset \left\{(v,e): 3v-3 \leq e \leq {v \choose 2} \right\} \setminus \big\{ (7,18),(8,21),(9,25),(11,30)\big\}.\]
\end{lemma}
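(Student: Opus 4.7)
The plan is to obtain every pair $(v,e)$ in the claimed set as $(f_0(Q),f_1(Q))$ for a $5$-polytope $Q$ that is a pyramid over a suitable $4$-polytope $P$. The pyramid formulas $f_0(Q)=f_0(P)+1$ and $f_1(Q)=f_0(P)+f_1(P)$ suggest the substitution $v'=v-1$, $e'=e-v+1$, so that any $4$-polytope $P$ with $(f_0(P),f_1(P))=(v',e')$ yields a $5$-polytope $Q$ with $(f_0(Q),f_1(Q))=(v,e)$.

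Next I would verify that the inequalities $3v-3\leq e\leq\binom{v}{2}$ are equivalent to the inequalities $2v'\leq e'\leq\binom{v'}{2}$ under this substitution. The lower bound $3v-3\leq e$ rearranges to $2(v-1)\leq e-v+1$, i.e.\ $2v'\leq e'$. For the upper bound, one checks that $v-1+\binom{v-1}{2}=\binom{v}{2}$, so $e\leq\binom{v}{2}$ is equivalent to $e'\leq\binom{v'}{2}$. Hence the range of $(v,e)$ in the target set corresponds exactly to the range in Theorem~\ref{4poly}.

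Finally I would translate Gr\"unbaum's four exceptional pairs through the same substitution: the pairs $(v',e')\in\{(6,12),(7,14),(8,17),(10,20)\}$ correspond to $(v,e)\in\{(7,18),(8,21),(9,25),(11,30)\}$, matching the exceptions listed in the statement. Thus, for every $(v,e)$ in the claimed set, Theorem~\ref{4poly} supplies a $4$-polytope $P$ with $(f_0(P),f_1(P))=(v-1,e-v+1)$, and the pyramid over $P$ realizes $(v,e)$ in $\mathcal E^5$.

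There is no real obstacle: the proof is a direct bookkeeping check that the pyramid construction transports Gr\"unbaum's characterization into the claimed lower region of $\mathcal E^5$. The only thing to be careful about is the arithmetic matching the four exceptions, which is routine.
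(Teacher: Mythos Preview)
Your proposal is correct and follows exactly the approach of the paper: take pyramids over $4$-polytopes and apply Gr\"unbaum's Theorem~\ref{4poly}, with the substitution $v'=v-1$, $e'=e-v+1$ translating both the inequalities and the four exceptions. The paper states this in one line (``This simple fact and Theorem~\ref{4poly} prove the next lemma''), and your write-up simply spells out the arithmetic.
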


Let $P$ be a $d$-polytope.
The \textit{degree} $\deg v$ of a vertex $v$ of $P$ is the number of edges of $P$ that contain $v$.
We say that a vertex $v$ is \textit{simple} if $\deg v=d$.
Let $V(P)$ be the vertex set of $P$.

\begin{lemma}
\label{2.2}
If $P$ is a $5$-polytope such that $f_1(P) \leq 3f_0(P) -1$, then $P$ has a simple vertex.
\end{lemma}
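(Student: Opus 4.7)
The plan is a simple averaging/counting argument. Let $d=5$. Recall that for any $d$-polytope, each vertex has degree at least $d$: indeed, the vertex figure at any vertex $v$ is a $(d-1)$-polytope, which has at least $d$ vertices, and these correspond bijectively to the edges of $P$ through $v$. So every vertex of $P$ has degree $\geq 5$.

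Next I would use the handshake relation
\[
2 f_1(P)= \sum_{v \in V(P)} \deg v.
\]
Suppose for contradiction that $P$ has no simple vertex. Then every vertex has degree $\geq 6$, so
\[
2f_1(P) = \sum_{v \in V(P)} \deg v \geq 6 f_0(P),
\]
i.e., $f_1(P) \geq 3 f_0(P)$. This contradicts the hypothesis $f_1(P) \leq 3 f_0(P)-1$. Hence $P$ must contain a vertex of degree exactly $5$, i.e.\ a simple vertex.

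There is essentially no obstacle here; the only substantive input is the standard fact that every vertex of a $d$-polytope has degree at least $d$, which follows from the vertex figure being a $(d-1)$-polytope. The rest is the handshake lemma and an inequality. The lemma is evidently designed as a convenient structural tool for later arguments (it lets one inductively trim off a simple vertex while controlling $f_0$ and $f_1$), and the clean threshold $3f_0(P)-1$ is exactly what the averaging argument produces.
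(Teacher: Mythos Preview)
Your proof is correct and is essentially identical to the paper's: both use the fact that every vertex of a $5$-polytope has degree at least $5$, together with the handshake identity $2f_1(P)=\sum_v \deg v$, to conclude that $f_1(P)\leq 3f_0(P)-1$ forces some vertex of degree $5$.
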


\begin{proof}
Observe $\deg v \geq 5$ for any $v \in V(P)$.
Since $\frac 1 2 \sum_{ v \in V(P)} \deg v=f_1(P)=3f_0(P)-1$, there must exist a vertex $v \in V(P)$ of degree $<6$.
\end{proof}

Let
\[
\textstyle
X=\left\{(v,e): \frac 5 2 v \leq e \leq {v \choose 2}\right\} \setminus (L \cup G)\]
be the right-hand side of Theorem \ref{mainthm}, and let
\[X_k=\{(v,e) \in X: v=k\}\]
for $k \in \ZZ$.
We want to prove $\mathcal E^5 \supset X_k$ for all $k \geq 6$.
To prove this, we use truncations.
For a $5$-polytope $P$ and its vertex $v \in V(P)$,
we write $\mathrm{tr}(P,v)$ for a polytope obtained from $P$ by truncating the vertex $v$.
If $v$ is simple, then
\[ f_0(\mathrm{tr}(P,v))=f_0(P)+4 \mbox{ and } f_1(\mathrm{tr}(P,v))=f_1(P)+10.\]

\begin{lemma}
\label{2.3}
For $k \geq 6$ with $k \not \in \{8,9,13\}$,
if $\mathcal E^5 \supset X_k$ then $\mathcal E^5 \supset X_{k+4}$.
\end{lemma}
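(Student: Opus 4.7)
The plan is to realize each pair $(k+4,e) \in X_{k+4}$ either by a direct appeal to Lemma~\ref{2.1} or by truncating a simple vertex of a $5$-polytope supplied by the inductive hypothesis $X_k \subset \mathcal{E}^5$, splitting on the size of $e$ relative to $3(k+4)-3$.

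For the upper range $e \geq 3(k+4)-3$, Lemma~\ref{2.1} realizes $(k+4,e)$ unless the pair is one of the four exceptions $(7,18), (8,21), (9,25), (11,30)$. Since $k \geq 6$ forces $k+4 \geq 10$, only $(11,30)$ can possibly occur, and only when $k=7$; this isolated pair will be recovered by the truncation argument below, since $e = 30 = 3\cdot 7 + 9$ still satisfies $e-10 \leq 3k-1$.

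For the lower range $e \leq 3(k+4)-3$, set $e' = e - 10$ and aim to show $(k, e') \in X_k$. The inequality $\tfrac{5}{2}k \leq e'$ is a shift of $\tfrac{5}{2}(k+4) \leq e$; the inequality $e' \leq \binom{k}{2}$ follows from $e' \leq 3k-1 \leq \binom{k}{2}$ when $k \geq 7$, while for $k = 6$ the only threatening value is $e = 26$, but $(10,26) \in L$ is already excluded from $X_{10}$. The identity $\lfloor \tfrac{5}{2}(k+4)+1 \rfloor = \lfloor \tfrac{5}{2}k+1 \rfloor + 10$ transfers the $L$-obstruction between the two levels, so $(k+4,e)\notin L$ implies $(k,e')\notin L$, and $(k,e')\notin G$ is immediate from the hypothesis $k \notin \{8,9,13\}$. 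The hypothesis $X_k \subset \mathcal{E}^5$ then yields a $5$-polytope $P$ with $f_0(P) = k$ and $f_1(P) = e'$; since $e' \leq 3k-1$, Lemma~\ref{2.2} produces a simple vertex $v \in V(P)$, and the truncation $\mathrm{tr}(P,v)$ has $f$-numbers $(k+4, e'+10) = (k+4, e)$, as required.

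The main obstacle is the boundary bookkeeping: verifying that the Lemma~\ref{2.1}-exception $(11,30)$ at $k=7$ is recovered by the truncation branch, and that when $k = 6$ the apparently problematic pair $(10,26)$ is automatically excluded from $X_{10}$ as a member of $L$. The hypothesis $k \notin \{8,9,13\}$ enters precisely to block $G$-obstructions from appearing at the lower level $k$ via the truncation map.
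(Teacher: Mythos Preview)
Your proof is correct and follows the same strategy as the paper: cover the high values of $e$ via Lemma~\ref{2.1} and the low values by truncating a simple vertex of a polytope supplied by $X_k\subset\mathcal E^5$, using Lemma~\ref{2.2} to guarantee such a vertex when $e'\le 3k-1$. You are in fact more explicit than the paper about two boundary cases---the Lemma~\ref{2.1} exception $(11,30)$ at $k=7$ and the potential failure of $e'\le\binom{k}{2}$ at $k=6$---both of which the paper's argument passes over silently. One tiny wording issue: at $k=6$ your two ranges overlap at $e=3(k+4)-3=27$, and $e'=17>\binom{6}{2}$ there too, so strictly speaking $e=27$ is also ``threatening'' for the truncation branch; but since $(10,27)$ is already handled by Lemma~\ref{2.1}, this does not affect the argument.
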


\begin{proof}
Since $k \not \in \{8,9,13\}$,
\[
\textstyle
X_k=\left\{(k,e): \frac 5 2 k \leq e \leq {k \choose 2}\right\} \setminus \left\{ \left(k, \left\lfloor \frac 5 2k +1\right\rfloor\right)\right\}.\]
By Lemma \ref{2.2}, for any $5$-polytope $P$ with $f_0(P)=k$ and $f_1(P) \leq 3k-1$, we can make a $5$-polytope $Q$ with
\[f_0(Q)=k+4 \mbox{ and }f_1(Q)=f_1(P)+10\]
by truncating a simple vertex from $P$.
Since $\mathcal E^5 \supset X_k$, this implies
\begin{align*}
\mathcal E^5 
&\supset
\left \{ (k+4,e+10): \frac 5 2 k \leq e \leq 3k-1\right\} \setminus \left\{ \left(k+4, \left\lfloor \frac 5 2 k +11 \right\rfloor\right)\right\}\\
&=
\left \{ \big((k+4),e'\big): \frac 5 2 (k+4) \leq e' \leq 3(k+4)-3 \right\} \setminus \left\{ \left(k+4, \left\lfloor \frac 5 2 (k+4) +1 \right\rfloor\right)\right\}.
\end{align*}
The above inclusion and Lemma \ref{2.1} prove the desired statement.
\end{proof}

Now, we prove the main result of this section.
For a convex polytope $P$,
we write $P^*$ for its dual polytope.
In the rest of the paper,
if a face of a convex polytope is a simplex, then we call it a \textit{simplex face}.
A face which is not a simplex is called a \textit{non-simplex face}.

\begin{theorem}
\label{sufficiency}
$\mathcal E^5 \supset X$.
\end{theorem}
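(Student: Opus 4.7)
The plan is to combine the three ingredients already assembled: the pyramid-based realisation of Lemma \ref{2.1}, the vertex-truncation step of Lemma \ref{2.3}, and a short list of explicit constructions for small $v$. Since Lemma \ref{2.1} already realises every pair of $X$ with $e \geq 3v-3$, it suffices to exhibit $5$-polytopes achieving each pair $(v,e) \in X$ with $\frac{5}{2}v \leq e < 3v-3$.

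We organise the remaining work by stepping in $v$ by four. For any $v \geq 10$ with $v-4 \notin \{8,9,13\}$, Lemma \ref{2.3} propagates $\mathcal E^5 \supset X_{v-4}$ to $\mathcal E^5 \supset X_v$, so iterating this reduces the problem to the finite set of base indices $v \in \{6,7,8,9,12,13,17\}$. For $v \in \{6,7,8,9\}$, a direct check shows that $X_v$ is entirely contained in the range $e \geq 3v-3$ covered by Lemma \ref{2.1} once the pairs excluded by $L$ and $G$ are accounted for, so no new construction is needed. The real content of the proof is therefore the explicit realisation of the following short list of low-edge pairs: $(12,30)$, $(12,32)$, $(13,34)$, and $(17,44),(17,45),(17,46),(17,47)$.

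For these explicit constructions we can draw on products of simplices (for instance $\Delta^3 \times \Delta^2$ is a simple $5$-polytope realising $(12,30)$), pyramids over appropriate $4$-polytopes supplied by Theorem \ref{4poly}, and vertex truncations or bipyramids of smaller simple $5$-polytopes. Passing through the dual polytope $P \mapsto P^*$ is a useful auxiliary device, converting the search for a simple $5$-polytope with prescribed $(f_0,f_1)$ into a search for a simplicial $5$-polytope with prescribed $(f_4,f_3)$; the simplex/non-simplex face distinction introduced just above the theorem statement is likely used to track which faces are affected by each operation, and in particular to decide whether a truncation or a pulling of a vertex produces the desired increment in $f_1$.

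The main obstacle is clearly the quartet $(17,44),(17,45),(17,46),(17,47)$: the natural inductive route $v=13 \to v=17$ through Lemma \ref{2.3} is blocked precisely by the exception $(13,35) \in G$, so all four pairs must be built from scratch, each checked to avoid the forbidden pair $(17,43) \in L$. A secondary delicate point is $(12,30)$, which lies on the lower boundary $e=\frac{5}{2}v$ and hence requires a genuine simple $5$-polytope on $12$ vertices; the nontrivial single-pair case $(13,34)$ must then be realised near the $G$-exception at $(13,35)$ without colliding with it. Once these seven base realisations are in hand, Lemma \ref{2.3} propagates the result to every remaining $v$, completing the proof that $\mathcal E^5 \supset X$.
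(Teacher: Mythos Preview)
Your reduction is exactly the paper's: use Lemma~\ref{2.1} for the range $e\ge 3v-3$, invoke Lemma~\ref{2.3} to step by four in $v$, and reduce to the seven pairs $(12,30),(12,32),(13,34),(17,44),(17,45),(17,46),(17,47)$. Your realisation of $(12,30)$ by $\Delta^3\times\Delta^2$ is a pleasant alternative to the paper's dual cyclic polytope $C^*$.

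There is one genuine gap. You overstate the difficulty at $v=17$: although Lemma~\ref{2.3} is not stated for $k=13$, its \emph{mechanism} still applies pair by pair. Since $(13,34),(13,36),(13,37)$ all lie in $X_{13}$ and each satisfies $e\le 3\cdot 13-1$, Lemma~\ref{2.2} supplies a simple vertex and a single truncation yields $(17,44),(17,46),(17,47)$. Likewise $(12,32)$ and $(13,34)$ come from truncating $(8,22)$ and $(9,24)$, both already in hand via Lemma~\ref{2.1}. So five of your six outstanding pairs are immediate once you notice this.

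The only pair that actually needs a new idea is $(17,45)$, precisely because its truncation preimage $(13,35)$ is in $G$. None of your listed tools reach it directly: a pyramid over a $4$-polytope would require $(16,29)\in\mathcal E^4$, which fails the lower bound $2v$; a bipyramid is worse. The paper's move is to truncate a simple vertex of the $(12,30)$ polytope to obtain a $5$-polytope with $(f_0,f_1)=(16,40)$ that has a simplex facet (the truncation facet), and then stack a pyramid on that facet, adding one vertex and five edges to get $(17,45)$. Your sketch mentions tracking simplex versus non-simplex faces, and this is exactly where that distinction pays off; but you should make the construction explicit, since ``products, pyramids, truncations, bipyramids'' alone do not obviously produce this pair.
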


\begin{proof}
By Lemma \ref{2.3},
it is enough to show that
\begin{align}
\label{2-1}
\mathcal E^5 \supset  X_6 \cup X_7 \cup X_8 \cup X_9 \cup X_{12} \cup X_{13} \cup X_{17}.
\end{align}
Let $\varphi(v)=3v-3$.
By Lemma \ref{2.1},
$(v,e) \in \mathcal E^5$ if $\varphi(v) \leq e \leq {v \choose 2}$ and $(v,e)\not \in \{(7,18),(8,21),(9,25),(11,30)\}$.
Observe $\varphi(6)=15,\varphi(7)=18,\varphi(8)=21,\varphi(9)=24,\varphi(12)=33,\varphi(13)=36,\varphi(17)=48$.
Then, to prove \eqref{2-1}, what we must prove is
\begin{align}
\label{2-2}
\mathcal E^5 \supset \{
(12,30),(12,32),(13,34),(17,44),(17,45),(17,46),(17,47)\}.
\end{align}
(See also Table 1.)
Note that this observation says $\mathcal E^5 \supset X_k$ for $k \leq 9$.

Let $C$ be the cyclic $5$-polytope with $7$ vertices.
Then $f(C)=(7,21,34,30,12)$ (see \cite[\S 18]{Br}).
Hence $f_0(C^*)=12$ and $f_1(C^*)=30$, and therefore $(12,30) \in \mathcal E^5$.
Let $C'$ be the polytope obtained from $C^*$ by truncating its vertex.
Note that every vertex of $C^*$ is simple.
Since a truncation of a simple vertex creates a simplex facet,
$C'$ contains a simplex facet $F$.
Let $C''$ be the polytope obtained from $C'$ by adding a pyramid over $F$. Then
\[f_0(C'')=f_0(C')+1=f_0(C^*)+5=17\]
and
\[f_1(C'')=f_1(C')+5=f_1(C^*)+15=45.\]
Hence $(17,45) \in \mathcal E^5$.
We already see $(8,22),(9,24) \in \mathcal E^5$.
Then, using truncations of simple vertices and Lemma \ref{2.2},
we have $(12,32),(13,34),(17,44) \in \mathcal E^5$.
Also, since $(13,36),(13,37) \in \mathcal E^5$,
by the same argument we have $(17,46),(17,47) \in \mathcal E^5$.
These complete the proof of the theorem.
\end{proof}

\section{Necessity}

In this section, we prove the necessity part of Theorem \ref{mainthm}.
We first show that any element in $L$ is not contained in $\mathcal E^5$.
We introduce some lemmas which we need.
The following fact appears in \cite[\S 6.1]{Gr} (see also \cite[Problem 6.8]{Zi}).

\begin{lemma}
\label{3.1}
There are exactly four combinatorially different $4$-polytopes with $6$ facets. They are
\begin{itemize}
\item[($P_A$)] Pyramid over a square pyramid;
\item[($P_B$)] Pyramid over a triangular prism;
\item[($P_C$)] A polytope obtained from a $4$-simplex by truncating its vertex;
\item[($P_D$)] Product of two triangles.
\end{itemize}
\end{lemma}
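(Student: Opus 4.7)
The strategy is to use polarity: $4$-polytopes with $6$ facets correspond bijectively to $4$-polytopes with $6$ vertices, so the plan is to classify the latter and then dualize. Since the minimum number of vertices of a $4$-polytope is $5$ (the simplex), this is the first nontrivial case above the simplex, and the Gale transform handles it cleanly.

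The Gale transform sends a $4$-polytope with $6 = 4 + 2$ vertices to a configuration of $6$ vectors in $\mathbb{R}^{6-4-1} = \mathbb{R}^{1}$ that sum to zero; equivalently, an affine Gale diagram assigns each vertex a sign in $\{+,-,0\}$ according to which side of the origin its image lies on. Let $(a,b,c)$ record the numbers of $+$, $-$, and $0$ points; by the standard Gale criterion, such a signed multiset comes from a convex polytope iff $a + b + c = 6$ and $a, b \geq 2$. Up to the symmetry $+ \leftrightarrow -$, the admissible triples are exactly
\begin{equation*}
(2,2,2),\quad (3,2,1),\quad (3,3,0),\quad (4,2,0),
\end{equation*}
giving four combinatorial types.

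The last step is to match each Gale diagram with one of the listed polytopes (and then dualize). A Gale sign $0$ signals a pyramidal apex, so $(2,2,2)$ is a double iteration of pyramid, $(3,2,1)$ is a single pyramid, while $(3,3,0)$ and $(4,2,0)$ are non-pyramidal. Recursion on the number of zeros---classifying the base polytopes with $4$ vertices in dimension $2$ and $5$ vertices in dimension $3$---yields: $(2,2,2)$ is a pyramid over a square pyramid, self-dual, giving $P_A$; $(3,2,1)$ is a pyramid over a triangular bipyramid, whose dual is the pyramid over a triangular prism $P_B$; $(3,3,0)$ is the free sum $T_2 \oplus T_2$ (equivalently the cyclic polytope $C_4(6)$), whose dual is $T_2 \times T_2 = P_D$; and $(4,2,0)$ is the stacked polytope obtained by gluing two $4$-simplices along a common $3$-face, whose dual is a $4$-simplex with a truncated vertex, $P_C$. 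Dualizing gives the four polytopes in the statement.

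The main bookkeeping obstacle is the last step: extracting the face lattice of each tiny polytope from its Gale cocircuits in order to verify that it really is the combinatorial type claimed. With only six vertices, however, each case reduces to inspecting a handful of cocircuits, and the four identifications above are forced.
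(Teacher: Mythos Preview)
Your argument is correct: classifying $4$-polytopes with $6$ vertices via their one-dimensional Gale diagrams and then dualizing is precisely the standard route. The paper does not give its own proof of this lemma but simply cites it from \cite[\S 6.1]{Gr} and \cite[Problem~6.8]{Zi}, where exactly this Gale-diagram classification is carried out, so your proposal matches the argument the paper is invoking.
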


Here are Schlegel diagrams and a list of facets of $P_A,P_B,P_C$ and $P_D$.
\bigskip

\begin{center}
{\unitlength 0.1in%
\begin{picture}(56.4000,15.1500)(16.0000,-24.8500)%
%
\special{pn 13}%
\special{pa 1600 2280}%
\special{pa 2400 2280}%
\special{fp}%
\special{pa 2400 2280}%
\special{pa 2880 1960}%
\special{fp}%
\special{pa 2880 1960}%
\special{pa 2880 1960}%
\special{fp}%
%
\special{pn 13}%
\special{pa 2880 1960}%
\special{pa 2880 1960}%
\special{dt 0.045}%
\special{pa 2080 1960}%
\special{pa 2080 1960}%
\special{dt 0.045}%
\special{pa 1600 2280}%
\special{pa 2080 1960}%
\special{dt 0.045}%
\special{pa 2080 1960}%
\special{pa 2880 1960}%
\special{dt 0.045}%
%
\special{pn 8}%
\special{pa 2080 1960}%
\special{pa 2080 1960}%
\special{fp}%
%
\special{pn 13}%
\special{pa 2240 1000}%
\special{pa 1600 2280}%
\special{fp}%
\special{pa 2400 2280}%
\special{pa 2240 1000}%
\special{fp}%
\special{pa 2240 1000}%
\special{pa 2880 1960}%
\special{fp}%
%
\special{pn 8}%
\special{pa 2240 1480}%
\special{pa 1600 2280}%
\special{fp}%
\special{pa 2400 2280}%
\special{pa 2240 1480}%
\special{fp}%
\special{pa 2240 1480}%
\special{pa 2880 1960}%
\special{fp}%
\special{pa 2080 1960}%
\special{pa 2240 1480}%
\special{fp}%
\special{pa 2240 1480}%
\special{pa 2240 1000}%
\special{fp}%
%
\special{pn 13}%
\special{pa 2080 1960}%
\special{pa 2240 1000}%
\special{dt 0.045}%
%
\special{pn 13}%
\special{pa 4600 2250}%
\special{pa 5720 2250}%
\special{fp}%
%
\special{pn 13}%
\special{pa 5720 2250}%
\special{pa 5240 1930}%
\special{dt 0.045}%
\special{pa 5240 1930}%
\special{pa 4600 2250}%
\special{dt 0.045}%
%
\special{pn 13}%
\special{pa 4600 1290}%
\special{pa 5720 1290}%
\special{fp}%
\special{pa 5720 1290}%
\special{pa 5240 970}%
\special{fp}%
\special{pa 5240 970}%
\special{pa 4600 1290}%
\special{fp}%
%
\special{pn 13}%
\special{pa 4600 1290}%
\special{pa 4600 2250}%
\special{fp}%
\special{pa 5720 2250}%
\special{pa 5720 1290}%
\special{fp}%
%
\special{pn 13}%
\special{pa 5240 970}%
\special{pa 5240 1930}%
\special{dt 0.045}%
%
\special{pn 8}%
\special{pa 5080 1450}%
\special{pa 5720 1290}%
\special{fp}%
\special{pa 5080 1450}%
\special{pa 5240 970}%
\special{fp}%
\special{pa 5080 1450}%
\special{pa 4600 1290}%
\special{fp}%
\special{pa 5080 1450}%
\special{pa 5080 1610}%
\special{fp}%
\special{pa 5080 1610}%
\special{pa 5240 1930}%
\special{fp}%
\special{pa 5080 1610}%
\special{pa 5720 2250}%
\special{fp}%
\special{pa 5080 1610}%
\special{pa 4600 2250}%
\special{fp}%
%
\special{pn 13}%
\special{pa 3100 2260}%
\special{pa 4220 2260}%
\special{fp}%
%
\special{pn 13}%
\special{pa 4220 2260}%
\special{pa 3740 1940}%
\special{dt 0.045}%
\special{pa 3740 1940}%
\special{pa 3100 2260}%
\special{dt 0.045}%
%
\special{pn 13}%
\special{pa 3100 1300}%
\special{pa 4220 1300}%
\special{fp}%
\special{pa 4220 1300}%
\special{pa 3740 980}%
\special{fp}%
\special{pa 3740 980}%
\special{pa 3100 1300}%
\special{fp}%
%
\special{pn 13}%
\special{pa 3100 1300}%
\special{pa 3100 2260}%
\special{fp}%
\special{pa 4220 2260}%
\special{pa 4220 1300}%
\special{fp}%
%
\special{pn 13}%
\special{pa 3740 980}%
\special{pa 3740 1940}%
\special{dt 0.045}%
%
\special{pn 13}%
\special{pa 6120 2250}%
\special{pa 7240 2250}%
\special{fp}%
%
\special{pn 13}%
\special{pa 7240 2250}%
\special{pa 6760 1930}%
\special{dt 0.045}%
\special{pa 6760 1930}%
\special{pa 6120 2250}%
\special{dt 0.045}%
%
\special{pn 13}%
\special{pa 6120 1290}%
\special{pa 7240 1290}%
\special{fp}%
\special{pa 7240 1290}%
\special{pa 6760 970}%
\special{fp}%
\special{pa 6760 970}%
\special{pa 6120 1290}%
\special{fp}%
%
\special{pn 13}%
\special{pa 6120 1290}%
\special{pa 6120 2250}%
\special{fp}%
\special{pa 7240 2250}%
\special{pa 7240 1290}%
\special{fp}%
%
\special{pn 13}%
\special{pa 6760 970}%
\special{pa 6760 1930}%
\special{dt 0.045}%
%
\special{pn 8}%
\special{pa 3580 1620}%
\special{pa 4220 1300}%
\special{fp}%
\special{pa 4220 2260}%
\special{pa 3580 1620}%
\special{fp}%
\special{pa 3580 1620}%
\special{pa 3740 1940}%
\special{fp}%
\special{pa 3580 1620}%
\special{pa 3100 2260}%
\special{fp}%
\special{pa 3580 1620}%
\special{pa 3100 1300}%
\special{fp}%
\special{pa 3580 1620}%
\special{pa 3740 980}%
\special{fp}%
%
\special{pn 8}%
\special{pa 6680 1610}%
\special{pa 6760 970}%
\special{fp}%
\special{pa 6760 1930}%
\special{pa 6680 1610}%
\special{fp}%
%
\special{pn 8}%
\special{pa 7080 1770}%
\special{pa 7240 1290}%
\special{fp}%
\special{pa 7240 2250}%
\special{pa 7080 1770}%
\special{fp}%
\special{pa 7080 1770}%
\special{pa 6280 1770}%
\special{fp}%
\special{pa 6280 1770}%
\special{pa 6680 1610}%
\special{fp}%
\special{pa 6680 1610}%
\special{pa 7080 1770}%
\special{fp}%
\special{pa 6280 1770}%
\special{pa 6120 1290}%
\special{fp}%
\special{pa 6280 1770}%
\special{pa 6120 2250}%
\special{fp}%
\put(21.7000,-25.5000){\makebox(0,0){$P_A$}}%
\put(36.9000,-25.5000){\makebox(0,0){$P_B$}}%
\put(52.9000,-25.5000){\makebox(0,0){$P_C$}}%
\put(66.9000,-25.5000){\makebox(0,0){$P_D$}}%
\end{picture}}%
\end{center}
\bigskip

\begin{center}
\begin{tabular}{|c|l|}
\hline
Type & Facets\\
\hline
$P_A$ & two square pyramids, four tetrahedra\\
\hline
$P_B$ & three square pyramids, one triangular prism, two tetrahedra\\
\hline
$P_C$ & four triangular prisms, two tetrahedra\\
\hline
$P_D$ & six triangular prisms\\
\hline
\end{tabular}
\end{center}
\bigskip

Recall that a convex polytope $P$ is said to be \textit{simplicial} if all its proper faces are simplices.
A \textit{simplicial $k$-sphere} is a simplicial complex whose geometric realization is homeomorphic to the $k$-sphere.
The boundary complex of a simplicial $d$-polytope is a simplicial $(d-1)$-sphere.
The next statement easily follows from the Lower Bound Theorem (see \cite{LBT}) and the Upper Bound Theorem (see \cite[Corollary II.3.5]{St}) for simplicial spheres.

\begin{lemma}
\label{3.2}
Let $\Delta$ be a simplicial $3$-sphere.
\begin{itemize}
\item[(i)] $f_3(\Delta) \ne 6,7,10$.
\item[(ii)] If $f_3(\Delta)=9$, then $\Delta$ is neighbourly, that is, every pair of  vertices of $\Delta$ are connected by an edge.
\end{itemize}
\end{lemma}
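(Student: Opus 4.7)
The plan is to turn the hypotheses on $f_3(\Delta)$ into restrictions on $f_0(\Delta)$ via the Lower Bound Theorem, and then rule out the resulting small vertex counts using the Upper Bound Theorem together with the well-known fact that $\partial\Delta^4$ is the unique simplicial $3$-sphere on $5$ vertices.

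The first step is to extract the identities that hold for every simplicial $3$-sphere $\Delta$. Each facet is a tetrahedron and each $2$-face lies in exactly two facets, so double counting yields $2f_2(\Delta)=4f_3(\Delta)$; combining $f_2(\Delta)=2f_3(\Delta)$ with Euler's relation $f_0-f_1+f_2-f_3=0$ then gives the identity $f_1(\Delta)=f_0(\Delta)+f_3(\Delta)$. Next I would invoke the two bounds cited in the statement: the Lower Bound Theorem gives $f_1(\Delta)\geq 4f_0(\Delta)-10$, which via the identity above rewrites as $f_0(\Delta)\leq \tfrac{1}{3}(f_3(\Delta)+10)$, and the Upper Bound Theorem gives $f_3(\Delta)\leq \tfrac{1}{2}f_0(\Delta)(f_0(\Delta)-3)$.

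For part (i) I would run a short case analysis, using that any simplicial $3$-sphere has at least $5$ vertices and the only one on $5$ vertices is $\partial\Delta^4$, for which $f_3=5$. If $f_3(\Delta)\in\{6,7\}$, the LBT forces $f_0(\Delta)\leq 5$, so $\Delta=\partial\Delta^4$, contradicting $f_3\neq 5$. If $f_3(\Delta)=10$, the LBT forces $f_0(\Delta)\leq 6$; the case $f_0=5$ is again excluded, leaving $f_0(\Delta)=6$, but then the UBT caps $f_3(\Delta)$ at $9$, another contradiction. For part (ii), the same argument shows that $f_3(\Delta)=9$ forces $f_0(\Delta)=6$, and then the identity $f_1=f_0+f_3$ gives $f_1(\Delta)=15=\binom{6}{2}$, i.e., every pair of vertices is joined by an edge, as required.

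I do not anticipate a serious obstacle here; the only subtlety is to cite the forms of the LBT and UBT that are valid for simplicial spheres (Kalai and Stanley respectively) rather than merely for simplicial polytopes, which is precisely what the statement allows.
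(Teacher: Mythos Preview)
Your argument is correct and follows essentially the same route as the paper: both use the Lower Bound Theorem and the Upper Bound Theorem to pin down the possible values of $f_0$ for each given $f_3$, and the paper's table (if $f_0=5$ then $f_3=5$; if $f_0=6$ then $8\le f_3\le 9$; if $f_0\ge 7$ then $f_3\ge 11$) is exactly your inequalities $f_0\le(f_3+10)/3$ and $f_3\le\tfrac12 f_0(f_0-3)$ read in the other direction. The one small difference is in part~(ii): the paper appeals to the equality case of the Upper Bound Theorem (``$f_3$ attains the UBT bound $\Rightarrow$ neighbourly''), whereas you instead use the Dehn--Sommerville identity $f_1=f_0+f_3$ to compute $f_1=15=\binom{6}{2}$ directly, which is a slightly more self-contained way to reach the same conclusion.
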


\begin{proof}
By the Lower Bound Theorem and the Upper Bound Theorem, we have
\begin{itemize}
\item if $f_0(\Delta)=5$, then $5 \leq f_3(\Delta) \leq 5$;
\item if $f_0(\Delta)=6$, then $8 \leq f_3(\Delta) \leq 9$;
\item if $f_0(\Delta)\geq 7$, then $11 \leq f_3(\Delta)$.
\end{itemize}
These clearly imply (i).
The statement (ii) follows from the fact that if the number of facets of a simplicial $(d-1)$-sphere equals to the bound in the Upper Bound Theorem, then it must be neighbourly (see e.g.\ the proof of \cite[Theorem 18.1]{Br}).
\end{proof}

We now prove that any element in $L$ is not contained in $\mathcal E^5$.

\begin{proposition}
\label{3.3}
If $P$ is a $5$-polytope, then
$f_1(P) \ne \lfloor \frac 5 2 f_0(P) +1 \rfloor$.
\end{proposition}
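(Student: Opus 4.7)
The plan is to dualize and analyze the facet structure of $P^*$. Setting $v = f_0(P)$ and $e = f_1(P)$, I would first observe that $P^*$ is a $5$-polytope with $v$ facets (each a $4$-polytope) and $e$ ridges, and that double-counting incidences between facets and ridges yields
\[
\sum_{F \text{ facet of } P^*} f_3(F) = 2e.
\]
Since every $4$-polytope has at least $5$ facets, with equality exactly for the $4$-simplex, the excess $2e - 5v$ measures how far $P^*$ is from being simplicial. A parity check shows that $2e = 5v+1$ when $v$ is odd and $2e = 5v+2$ when $v$ is even, so the non-simplex facets of $P^*$ are very tightly controlled.

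Concretely, the only possibilities are: (i) a single facet with $6$ facets and all others $4$-simplices, when $v$ is odd; (ii) a single facet with $7$ facets and all others $4$-simplices, when $v$ is even; or (iii) exactly two facets with $6$ facets each and all others $4$-simplices, when $v$ is even. The central observation driving everything is that any non-simplex $3$-face (ridge) of $P^*$ lies in exactly two facets of $P^*$, each of which must then itself be a non-simplex $4$-polytope.

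With this observation in hand each case falls quickly. In case (i), Lemma~\ref{3.2}(i) forbids a simplicial $3$-sphere with $6$ facets, so the unique non-simplex facet has some non-simplex $3$-face; the facet of $P^*$ on the other side of that ridge must then also be non-simplex, contradicting uniqueness. Case (ii) is analogous: if the $7$-facet facet is simplicial, Lemma~\ref{3.2}(i) again yields a contradiction, and otherwise the ridge-adjacency argument applies. For case (iii), Lemma~\ref{3.1} identifies each of the two $6$-facet facets with one of $P_A, P_B, P_C, P_D$, and inspection of the facet tables shows that each has at least two non-simplex $3$-faces; all such $3$-faces must be shared between the two non-simplex facets, yet two distinct facets of a convex polytope intersect in a single face and hence share at most one ridge.

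The main obstacle, beyond setting up the dual cleanly, is case (iii): one needs the precise classification of Lemma~\ref{3.1} (not merely the fact that a $4$-polytope with $6$ facets is non-simplicial) in order to know that every such polytope has at least two non-simplex $3$-faces. Once that is in hand, the ridge-adjacency observation together with Lemmas~\ref{3.1} and~\ref{3.2} closes all three cases with essentially no further calculation.
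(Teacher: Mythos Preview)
Your proposal is correct and follows essentially the same approach as the paper's proof: dualize, use the degree count to reduce to the three cases (i)--(iii), and then eliminate each via the ridge-adjacency observation combined with Lemmas~\ref{3.1} and~\ref{3.2}(i). The only difference is cosmetic phrasing: where the paper says ``$F$ must be simplicial, contradicting Lemma~\ref{3.2}(i)'' you argue the contrapositive, and in case (iii) the paper phrases the bound as ``$F$ and $G$ can have at most one non-simplex facet'' rather than ``share at most one ridge,'' but the content is identical.
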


\begin{proof}
Suppose to the contrary that $f_1(P)=\lfloor \frac 5 2 f_0(P)+1\rfloor$.
We first consider the case when $f_0(P)$ is odd.
Then, since $\sum_{v \in V(P)} \deg v=2 f_1(P)=5 f_0(P)+1$,
$P$ has one vertex having degree $6$ and all other vertices have degree $5$.
Then $P^*$ has one facet $F$ with $f_3(F)=6$ and all other facets of $P^*$ are simplices.
However this implies that the $4$-polytope $F$ must be simplicial, which contradicts Lemma \ref{3.2}(i).

Next, we consider the case when $f_0(P)$ is even.
In this case,
$\sum_{v\in V(P)} \deg v= 2 f_1(P) = 5 f_0(P)+2$, so one of the following two cases occurs:
\begin{itemize}
\item[(a)] $P^*$ has one facet $F$ with $f_3(P)=7$ and all other facets of $P^*$ are simplices; 
\item[(b)] $P^*$ has two facets $F$ and $G$ with $f_3(F)=f_3(G)=6$ and all other facets of $P^*$ are simplices.
\end{itemize}
Since there are no simplicial $4$-polytope with $7$ facets by Lemma \ref{2.2}(i), the case (a) cannot occur.
Also, if the case (b) occurs, then $F$ and $G$ can have at most one non-simplex facet.
However, Lemma \ref{3.1} says that any $4$-polytope with $6$ facets have at least two non-simplex facets.
\end{proof}

Next, we show that any element of $G$ is not contained in $\mathcal E^5$.
Let
$\phi(v,d)= \frac 1 2 dv + \frac 1 2 (v-d-1)(2d-v)$.
The following result was proved by Pineda-Villavicencio, Ugon and Yost \cite[Theorems 6 and 19]{PUY}.

\begin{theorem}
Let $P$ be a $d$-polytope.
\begin{itemize}
\item[(i)] 
If $f_0(P) \leq 2d$, then $f_1(P) \geq \phi(f_0(P),d)$.
\item[(ii)] 
If $d \geq 4$, then $(f_0(P),f_1(P)) \ne (d+4, \phi(d+4,d)+1).$
\end{itemize}
\end{theorem}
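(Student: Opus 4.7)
The plan is to prove (i) and (ii) separately, establishing (i) by induction on $v = f_0(P)$ with a case split on the existence of simple vertices, and (ii) by a contradiction argument anchored in a degree-sequence analysis.

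For part (i), the base case $v = d+1$ is the $d$-simplex, which achieves $f_1 = \binom{d+1}{2} = \phi(d+1,d)$. For the inductive step with $d+1 < v \leq 2d$, I would distinguish two cases according to the minimum vertex degree. If every vertex has degree at least $d+1$, then $2f_1(P) \geq (d+1)v$; writing $k = v - d - 1 \in [0,d-1]$, a direct algebraic check shows $(d+1)v - 2\phi(v,d) = d + 1 + 2k - kd + k^2 \geq 0$ throughout the range, which closes this case. Otherwise $P$ has a simple vertex $u$, whose vertex figure is a $(d-1)$-simplex on the neighborhood $N(u) = \{u_1,\dots,u_d\}$. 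I would pass to $P' = \mathrm{conv}(V(P) \setminus \{u\})$, a $d$-polytope with $v-1$ vertices, apply the inductive hypothesis $f_1(P') \geq \phi(v-1,d)$, and bound the number of new edges appearing in $P'$ (among the former neighbors of $u$ that were not already adjacent in $P$), combining with the identity $\phi(v,d) - \phi(v-1,d) = 2d - v + 1$ that I would verify in passing.

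The main obstacle in part (i) is precisely this last step: a crude bound permits up to $\binom{d}{2}$ new edges to appear in $P'$, which for small $v$ far exceeds the admissible increment $2d - v + 1$. To close the argument, one must use the local geometry at the simple vertex more carefully, for instance invoking Balinski's theorem to ensure that the graph of $P \setminus u$ remains $(d-1)$-connected, or exploiting the fact that new edges in $P'$ correspond to pairs of neighbors of $u$ that lie on a common facet of $P$ containing $u$, restricting their number via the simplicial structure of the vertex figure.

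For part (ii), suppose for contradiction that $P$ is a $d$-polytope with $f_0(P) = d+4$ and $f_1(P) = \phi(d+4,d)+1$. The total excess degree is
\[
\sum_{v \in V(P)} (\deg(v) - d) \;=\; 2f_1(P) - d(d+4) \;=\; 3d - 10,
\]
which is small compared to $d+4$, forcing most vertices to be simple. Passing to the dual, a simple vertex of $P$ corresponds to a simplex facet of $P^*$, so $P^*$ has many simplex facets and only a controlled number of nonsimplex facets, each with few vertices. I would then combine (an extension of) Lemma 3.1 with the constraint imposed by $f_1(P) = \phi(d+4,d)+1$ to count edges (equivalently, ridges of $P^*$) in two incompatible ways and derive a contradiction. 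The main difficulty is handling the range of possible degree sequences simultaneously, and the cleanest route appears to be a case analysis on whether the excess degree $3d-10$ is concentrated on one or two vertices or distributed more evenly; each distribution forces a different structural picture on $P^*$ which must be ruled out separately.
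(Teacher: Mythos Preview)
The paper does not actually prove this theorem: it is quoted verbatim as a result of Pineda-Villavicencio, Ugon and Yost \cite{PUY} (Theorems~6 and~19 there), and is used only as input to Corollary~\ref{3.4.1}. So there is no ``paper's own proof'' to compare your proposal against. That said, your sketch has substantive problems worth flagging.

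In part~(i), your treatment of the case ``every vertex has degree $\geq d+1$'' is incorrect. You assert that $(d+1)v - 2\phi(v,d) = k^2 - (d-2)k + (d+1) \geq 0$ for all $k \in [0,d-1]$. The identity is right, but the inequality fails once $d \geq 9$: the discriminant is $d^2 - 8d$, and for example at $d=10$, $k=4$ one gets $16 - 32 + 11 = -5$. Concretely, a $10$-polytope with $15$ vertices all of degree $11$ would have $f_1 \geq 83$, whereas $\phi(15,10) = 85$. So the degree-counting argument alone does not close this case, and you need additional structural input here just as in the simple-vertex case. In the simple-vertex case you correctly identify the obstacle yourself: after deleting a simple vertex $u$, up to $\binom{d}{2}$ new edges can appear among its neighbours in $P' = \conv(V(P)\setminus\{u\})$, which swamps the increment $\phi(v,d)-\phi(v-1,d) = 2d-v+1$. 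Neither Balinski's theorem nor the observation that new edges lie in facets through $u$ gives enough control by itself; the proof in \cite{PUY} proceeds instead through a structural classification of the near-minimizers (pyramids, prisms, and their iterated pyramids), not by a bare induction on $v$.

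For part~(ii) your outline is in the right spirit---the excess degree $2f_1 - d f_0 = 3d-10$ is indeed the organizing invariant in \cite{PUY}---but what you have written is a plan rather than an argument. The case analysis on how $3d-10$ is distributed among the vertices is exactly where the work lies, and for general $d$ it requires the structural results from part~(i) (identifying which polytopes attain $\phi(d+4,d)$) before one can exclude $\phi(d+4,d)+1$. As it stands, neither part constitutes a proof.
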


By considering the special case when $d=5$ of the above theorem, we obtain the following.

\begin{corollary}
\label{3.4.1}
$(8,20),(9,25) \not \in \mathcal E^5$.
\end{corollary}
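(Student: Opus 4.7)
The plan is to deduce both exclusions as immediate specializations of the quoted theorem of Pineda-Villavicencio, Ugon and Yost to the case $d=5$. Since $\phi(v,d)=\tfrac{1}{2}dv+\tfrac{1}{2}(v-d-1)(2d-v)$ is given by an explicit polynomial in $v$ and $d$, the entire argument reduces to two short numerical checks, one for each exceptional pair in $G\cap\{(8,20),(9,25)\}$.

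For $(8,20)$ I would invoke part (i) of the cited theorem. With $d=5$ the hypothesis $f_0(P)\leq 2d$ reads $8\leq 10$, which is clear, so any $5$-polytope $P$ with $f_0(P)=8$ is forced to satisfy $f_1(P)\geq \phi(8,5)$. It then suffices to evaluate $\phi(8,5)$ and observe that it strictly exceeds $20$; I expect the computation to give $\phi(8,5)=20+2=22$, which immediately rules out $(8,20)\in\mathcal{E}^5$.

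For $(9,25)$ I would invoke part (ii) of the cited theorem. Here $v=9=d+4$ with $d=5\geq 4$, so the statement applies verbatim and forbids the pair $(9,\phi(9,5)+1)$ from $\mathcal{E}^5$. The one thing to check is then the arithmetic identity $\phi(9,5)+1=25$, i.e.\ $\phi(9,5)=24$, which I expect to emerge as $\tfrac{45}{2}+\tfrac{3}{2}$.

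Both steps amount to routine evaluations of a closed-form polynomial, so I do not anticipate any substantive obstacle. The only care required is bookkeeping: making sure that the two parts of the cited theorem are paired with the correct exceptional points and that $\phi$ is applied with the intended conventions for its arguments. If the two numerical values come out as predicted, the corollary is immediate.
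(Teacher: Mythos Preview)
Your proposal is correct and matches the paper's own argument exactly: the paper simply states that the corollary follows by specializing the cited theorem of Pineda-Villavicencio, Ugon and Yost to $d=5$, and your write-up just makes the two numerical evaluations $\phi(8,5)=22$ and $\phi(9,5)=24$ explicit.
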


By Proposition \ref{3.3} and Corollary \ref{3.4.1},
to prove Theorem \ref{mainthm},
we only need to prove $(13,35) \not \in \mathcal E^5$.
We will prove this in the rest of this paper.

Let $P$ be a $5$-polytope.
For faces $F_1,F_2,\dots,F_k$ of $P$,
we write $\langle F_1,\dots,F_k\rangle$ for the polyhedral complex generated by $F_1,\dots,F_k$.
Let $\{G_1,\dots,G_l\}$ be a subset of the set of facets of $P$.
Then any $3$-face of $\Gamma=\langle G_1,\dots,G_l\rangle$ is contained in at most two facets of $\Gamma$.
We write
$$\partial \Gamma
=\langle H \in \Gamma: \mbox{ $H$ is a $3$-face of $\Gamma$ contained in exactly one facet of $\Gamma$}\rangle.$$
We often use the following trivial observation:
If $\{ G_1,\dots,G_l\}$ is the set of non-simplex facets of $P$, then $\partial \langle G_1,\dots,G_l\rangle$ is a simplicial complex.

We say that a $d$-polytope $P$ is \textit{almost simplicial} if all facets of $P$ except for one facet are simplices.
(We consider that simplicial polytopes are not almost simplicial.)
The next lemma can be checked by using a complete list of $4$-polytopes with at most $8$ vertices (see \cite{Mi}), but we write its proof for completeness.

\begin{lemma}
\label{3.5}
Let $P$ be a $4$-polytope.
\begin{itemize}
\item[(i)] Suppose that $P$ is almost simplicial and $f_3(P)=7$. Then $P$ is the pyramid over a triangular bipyramid.
\item[(ii)] Suppose that $P$ is almost simplicial and $f_3(P)=8$. Then $P$ does not contain a triangular bipyramid as a facet.
\item[(iii)] Suppose that $f_3(P)=7$ and $P$ has exactly two non-simplex facets $F$ and $G$.
Then none of $F$ and $G$ are square pyramids.
\end{itemize}
\end{lemma}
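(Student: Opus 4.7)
The unifying tactic in all three parts is the \emph{pairing rule}: each $2$-face of a $4$-polytope lies in exactly two facets, so the $2$-faces of any non-simplex facet must be matched, one by one, against $2$-faces of the remaining facets. Since those remaining facets are (almost) all tetrahedra, the number of triangle-matchings available is very small, and this restricts the combinatorics tightly.

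For (i), let $F$ be the non-simplex facet and $n = f_0(F)$. Every $2$-face of $F$ is matched with a tetrahedral facet, hence is a triangle, so $F$ is a simplicial $3$-polytope with $2n-4$ triangles, and $n \geq 5$ (otherwise $F$ would be a simplex). Euler's relation on $\partial P$, together with the elementary fact that every edge of a $4$-polytope lies in at least three facets, yields $f_0(P) \leq 7$; and $f_0(P) \geq 6$ since $P$ cannot be a $4$-simplex. When $f_0(P) = 6$, the dual $P^*$ is a $4$-polytope with $7$ vertices and $6$ facets, so Lemma~\ref{3.1} forces $P^* = P_B$, and dualizing gives $P = \mathrm{pyr}(\text{triangular bipyramid})$. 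To exclude $f_0(P) = 7$, I split on $n \in \{5,6\}$: for $n = 6$ the single outside vertex would lie in all $6$ tetrahedra, which are then obliged to match $8$ triangles of $F$; for $n = 5$ each of the two outside vertices must lie in at least $4$ of only $6$ tetrahedra, an impossibility.

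Part (ii) runs on the same pairing machinery with $F$ the triangular bipyramid ($f_2(F) = 6$) and $7$ tetrahedral facets available. The same edge bound gives $f_0(P) \leq 8$, while $f_0(P) \geq 6$ as before; I handle $N := f_0(P) \in \{6,7,8\}$ separately, classifying each tetrahedron by how many of the $N-5$ outside vertices it contains. The $N=6$ case dies immediately: the $7$ tetrahedra cannot match the $6$ triangles of $F$. In the $N=7$ case, exactly one tetrahedron contains both outside vertices, and the $2$-face joining those two vertices with a shared vertex of $F$ has no second facet to live in. The $N=8$ case falls to a vertex-degree count: each outside vertex needs to lie in $\geq 4$ facets, forcing more triangle-pairings than the $6$ triangles of $F$ can supply.

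Part (iii) is the most delicate. Assuming $F$ is a square pyramid with square face $S$, no tetrahedron has a square $2$-face, so $S$ pairs only with $G$; hence $F \cap G = S$ and every non-square $2$-face of $G$ must be a triangle shared with a tetrahedron. Writing $G$ abstractly as a $3$-polytope with one square and $k$ triangles, an Euler-based $f$-vector count forces $k$ even, while the need to match $F$'s $4$ triangles and $G$'s $k$ triangles into only $5$ tetrahedra forces $k \leq 5$. Thus $k = 4$ and $G$ is also a square pyramid; write $F = \mathrm{pyr}(abcd, e)$ and $G = \mathrm{pyr}(abcd, f)$. Any tetrahedron pairing with a triangle of $F$ and a triangle of $G$ must take the form $\{x, y, e, f\}$ with $xy$ an edge of the square, so there are at most $4$ such ``bridge'' tetrahedra. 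The argument finishes by splitting on the number $t_{FG} \in \{3,4\}$ of these bridges. I expect this last case analysis to be the main obstacle: each sub-case requires careful bookkeeping to locate a specific $2$-face (either in the ``residual'' tetrahedron when $t_{FG}=4$, or in an auxiliary tetrahedron containing a new vertex when $t_{FG}=3$) whose unmatchable status produces the contradiction.
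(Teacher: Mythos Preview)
Your pairing-and-case-analysis strategy is quite different from the paper's. For (i) the paper uses a single observation you never invoke: two facets of a $4$-polytope meet in at most one $2$-face, so the map sending each $2$-face of $F$ to the unique other facet containing it is injective, giving $f_2(F)\le f_3(P)-1=6$ and hence $F$ is the triangular bipyramid with no case split on $f_0(P)$ at all. For (ii) and (iii) the paper subdivides the non-simplex facets into tetrahedra (the bipyramid into two in~(ii); each square pyramid into two along a common diagonal in~(iii)) to obtain a simplicial $3$-sphere with exactly $9$ facets, and then Lemma~\ref{3.2}(ii) forces that sphere to be neighbourly, contradicting the obvious non-edge (the two apices in~(ii), the other diagonal of the square in~(iii)). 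This single appeal to the Upper Bound Theorem replaces all of your $N$- and $t_{FG}$-casework.

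More seriously, several of your case claims are false as stated. The cleanest failure is (ii), $N=7$: Euler gives $f_1=16$, and since $F$ contributes $9$ edges while the only candidate extra edge inside $V(F)$ joins the two apices, a degree count forces $w_1w_2$ to be an edge of $P$ with $\deg w_i=4$. That edge then lies in at least three facets, none of which is $F$, so at least three tetrahedra contain both outside vertices---not ``exactly one''---and your unmatchable-$2$-face argument collapses. Likewise in (i) with $f_0=7$, $n=6$: the tight edge bound forces $\deg w=4$, so the vertex figure at $w$ is a tetrahedron and $w$ lies in exactly four tetrahedra, not all six. And in (i) with $n=5$, the pigeonhole ``$4+4>6$'' yields a contradiction only if no tetrahedron contains both $w_1,w_2$, i.e.\ only if $w_1w_2$ is not an edge---which you have not argued and which is in fact false. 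Each of these sub-cases is repairable (the ``two facets share at most one $2$-face'' fact kills $n\ge 6$ in (i) instantly, for example), but the sketch as written does not go through.
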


\begin{proof}
(i) Let $F$ be the unique non-simplex facet of $P$.
Clearly, $F$ is simplicial and $f_2(F)\leq f_3(P)-1=6$ since, for each $2$-face of $F$, there is a unique $3$-face of $P$ that contains it other than $F$.
Since a $3$-simplex and a triangular bipyramid are the only simplicial $3$-polytopes having at most $6$ facets, $F$ is a triangular bipyramid.
Then, since $P$ has $7$ facets,
$P$ must be the pyramid over $F$.

(ii)
Let $F$ be the unique non-simplex facet of $P$.
If $F$ is a triangular bipyramid, then by subdividing $F$ into two tetrahedra without introducing edges, one obtains a simplicial $3$-sphere $\Delta$ with $9$ facets.
Since $F$ is a triangular bipyramid, there are two vertices $u$ and $v$ of $F$ such that $u$ and $v$ are not connected by an edge in $F$.
These vertices are not connected by an edge in $\Delta$ by the construction of $\Delta$,
which contradicts Lemma \ref{3.2}(ii) saying that $\Delta$ must be neighbourly.

(iii)
Suppose to the contrary that $F$ is a square pyramid.
We claim that $G$ is also a square pyramid.
Indeed, since $\partial \langle F, G \rangle$ is a simplicial complex, $G$ contains exactly one non-simplex facet, and this facet must be a square and equals to $F \cap G$.
Also,
$$f_2(G) \leq f_3(P)-1=6.$$
Let $\Delta$ be a simplicial $2$-sphere obtained from $G$ by subdividing the square $F \cap G$ into two triangles.
Then $f_2(\Delta) \leq 7$, but since the number of $2$-faces of a simplicial $2$-sphere is even, $f_2(\Delta)=6$ and therefore $f_2(G)=5$.
This forces that $G$ is a square pyramid.

Let $F=\conv(v_1,a,b,c,d)$ and $G=\mathrm{conv}(v_2,a,b,c,d)$,
where $\conv(v_1,\dots,v_k)$ denotes the convex hull of points $v_1,\dots,v_k$.
Note that $\conv(a,b,c,d)$ is a square.
We assume that $\conv(a,c)$ and $\conv(b,d)$ are non-edges of $P$.
By subdividing each $F$ and $G$ into two tetrahedra by adding an edge $\conv(a,c)$, we can make a simplicial $3$-sphere $\Gamma$ with $f_3(\Gamma)=f_3(P)+2=9$.
By the construction of $\Gamma$,
$\conv(b,d)$ is not an edge of $\Gamma$, but this contradicts Lemma \ref{3.2}(ii).
\end{proof}

We also recall some known results on $h$-vectors of simplicial balls and their boundaries.
For a simplicial complex $\Delta$ of dimension $d-1$,
its \textit{$h$-vector} $h(\Delta)=(h_0(\Delta),h_1(\Delta),\dots,h_d(\Delta))$ is defined by
$$h_i(\Delta)=\sum_{j=0}^i (-1)^{i-j} {d-j \choose i-j} f_{j-1}(\Delta)$$
where $f_{-1}(\Delta)=1$.
A \textit{simplicial $d$-ball} is a simplicial complex whose geometric carrier is homeomorphic to a $d$-dimensional ball.
The following facts are known.
See \cite[Chapter II and Problem 12]{St}.

\begin{lemma}
\label{3.7}
Let $\Delta$ be a simplicial $d$-ball and $(h_0,h_1,\dots,h_{d+1})$ its $h$-vector. Then
\begin{itemize}
\item[(i)] 
$h_0=1$ and $h_{d+1}=0$.
\item[(ii)] $h_0+\cdots+h_{d+1}=f_d(\Delta)$.
\item[(iii)] $h_i \geq 0$ for all $i$.
\item[(iv)] The $h$-vector of the boundary complex of $\Delta$ is
$$(h_0,h_0+h_1-h_d,h_0+h_1+h_2-h_d-h_{d-1},\cdots).$$
\end{itemize}
\end{lemma}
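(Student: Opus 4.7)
The plan is to dispatch parts (i)--(iii) quickly as consequences of the definition and the Cohen--Macaulay property of simplicial balls, leaving (iv) as the substantive point. For (i), the equality $h_0=1$ is immediate from $f_{-1}(\Delta)=1$, while unwinding the definition of $h_{d+1}$ produces $(-1)^d\tilde\chi(\Delta)$, which vanishes because a ball is contractible. For (ii), I would expand $\sum_i h_i(\Delta)$ as a double sum, interchange the order of summation, and apply the identity $\sum_k(-1)^k\binom{n}{k}=\delta_{n,0}$ to collapse everything down to the single term $f_d(\Delta)$.

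For (iii), the plan is to invoke Reisner's criterion. Every link of a face in a simplicial $d$-ball is itself a ball or a sphere of the appropriate dimension, so its reduced homology vanishes below the top dimension; hence the Stanley--Reisner ring $k[\Delta]$ is Cohen--Macaulay. Reducing modulo a linear system of parameters then produces an Artinian graded quotient whose Hilbert function is precisely $(h_0,\ldots,h_{d+1})$, forcing each $h_i$ to be a non-negative integer.

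Part (iv) is where I expect the main obstacle. The approach is to invoke Stanley's Dehn--Sommerville reciprocity for simplicial balls, namely that the ``interior $h$-vector'' of $\Delta$, built from the $f$-numbers $f_{i-1}^{\mathrm{int}}(\Delta)$ of interior faces, equals the reversal of $h(\Delta)$. Using the decomposition $f_{i-1}(\Delta)=f_{i-1}(\partial\Delta)+f_{i-1}^{\mathrm{int}}(\Delta)$ and comparing the two $h$-transformations (one with $d+1$ in the binomial for the $d$-ball $\Delta$, the other with $d$ for the $(d-1)$-sphere $\partial\Delta$), one extracts a linear relation that, solved inductively in $i$, reproduces the telescoping expression $h_i(\partial\Delta)=\sum_{j=0}^{i}h_j(\Delta)-\sum_{j=d-i+1}^{d}h_j(\Delta)$. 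An alternative route, useful as a sanity check, is to form the double $D(\Delta)$ obtained by gluing two copies of $\Delta$ along $\partial\Delta$: it is a simplicial $d$-sphere with $f_i(D(\Delta))=2f_i(\Delta)-f_i(\partial\Delta)$, so combining the resulting linear relation with the Dehn--Sommerville symmetry $h_i(D(\Delta))=h_{d+1-i}(D(\Delta))$ yields the same system. The main difficulty in either route is the bookkeeping between the two distinct binomial transformations and the verification that reciprocity yields exactly the stated partial-sum form rather than an equivalent but superficially different expression.
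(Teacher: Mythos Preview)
The paper does not prove this lemma at all: it is stated as a known fact with a citation to Stanley's \textit{Combinatorics and Commutative Algebra} (Chapter~II and Problem~12), and no argument is given. Your proposal therefore goes well beyond what the paper does.

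Your sketch is sound. Parts (i)--(iii) are exactly the standard arguments: the Euler-characteristic interpretation of $h_{d+1}$, the binomial collapse for $\sum h_i$, and Cohen--Macaulayness of balls via Reisner's criterion yielding non-negativity. For (iv), your primary route through Stanley's reciprocity $h^{\mathrm{int}}_i(\Delta)=h_{d+1-i}(\Delta)$ and the face decomposition $f_{i-1}(\Delta)=f_{i-1}(\partial\Delta)+f^{\mathrm{int}}_{i-1}(\Delta)$ is precisely the argument underlying the cited reference; telescoping the resulting relation $h_i(\partial\Delta)-h_{i-1}(\partial\Delta)=h_i(\Delta)-h_{d+1-i}(\Delta)$ gives the stated partial-sum formula. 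One caution on your alternative doubling route: the double $D(\Delta)$ need not literally be a simplicial complex (take $\Delta$ a single simplex, so both copies have the same vertex set), so Dehn--Sommerville must be invoked for it as a homology sphere or after a preliminary subdivision. Since you offer this only as a sanity check, it does not affect the main argument.
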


We now complete the proof of Theorem \ref{mainthm}.

\begin{theorem}
\label{3.8}
$(13,35) \not \in \mathcal E^5$.
\end{theorem}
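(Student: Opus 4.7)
The plan is to argue by contradiction via the dual polytope. Suppose $P$ is a $5$-polytope with $f_0(P)=13$ and $f_1(P)=35$, and set $Q=P^*$. Then $Q$ has $13$ facets and $35$ ridges, so
\[
\sum_{F\text{ facet of }Q} f_3(F)\;=\;2\,f_3(Q)\;=\;70.
\]
Since $f_3(F)\geq 5$ for every $4$-polytope $F$, the total excess satisfies $\sum_F(f_3(F)-5)=5$, so the multiset of positive excesses is a partition of $5$. This yields the seven cases
\[
(5),\quad (4,1),\quad (3,2),\quad (3,1,1),\quad (2,2,1),\quad (2,1,1,1),\quad (1,1,1,1,1),
\]
recording the values of $f_3(F)-5$ on the non-simplex facets of $Q$, and I would eliminate each case in turn.

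\noindent\textbf{Pairing principle and easier cases.} The central observation is that any $3$-face of $Q$ shared between a $4$-simplex facet and another facet is itself a $3$-simplex, since it is a facet of a $4$-simplex. Hence every non-tetrahedral $3$-face of $Q$ must be shared between two non-simplicial facets of matching combinatorial type. In the partitions $(5)$, $(4,1)$, $(3,2)$, and $(3,1,1)$, the number of facets of $Q$ with $f_3>5$ is at most three, so this pairing requirement is very restrictive. Combining the pairing principle with Lemma \ref{3.2}(i) (which forces any facet of $Q$ with $f_3\in\{6,7,10\}$ to be non-simplicial), with the classification of $4$-polytopes with $f_3=6$ from Lemma \ref{3.1}, and with the structural restrictions of Lemma \ref{3.5} for $f_3=7$ or $8$, rules out these four partitions. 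For instance, in case $(5)$ the sole facet $F$ with $f_3(F)=10$ has no partner for any of its non-tetrahedral $3$-faces, so $F$ would have to be simplicial, contradicting Lemma \ref{3.2}(i).

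\noindent\textbf{Remaining cases and main obstacle.} For the partitions $(2,2,1)$, $(2,1,1,1)$, and $(1,1,1,1,1)$ the pool of non-tetrahedral $3$-faces is large enough that the pairing principle alone admits many potential configurations, and one must also invoke the $h$-vector constraints of Lemma \ref{3.7}. Concretely, the strategy is to subdivide each non-simplicial facet of $Q$ into $4$-simplices (introducing as few new vertices and edges as possible, and tracking them carefully), thereby realizing a suitable portion of $\partial Q$ as a simplicial $4$-ball whose $f$-vector is determined by the case at hand; the nonnegativity of its $h$-vector in Lemma \ref{3.7}(iii) and the boundary identity in Lemma \ref{3.7}(iv) then yield numerical contradictions. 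The principal obstacle is the case $(1,1,1,1,1)$: $Q$ then has eight $4$-simplex facets and five facets $G_1,\dots,G_5$, each of type $P_A$, $P_B$, $P_C$, or $P_D$ from Lemma \ref{3.1}, contributing $(2,4,4,6)$ non-tetrahedral faces respectively, partitioned into square pyramids and triangular prisms. Enumerating all multisets $\{G_1,\dots,G_5\}$ for which the total numbers of square-pyramid and triangular-prism faces are each even (so that the pairing principle can be satisfied), and then eliminating each admissible configuration via the $h$-vector inequalities above, is the technical heart of the proof; once completed, it establishes $(13,35)\notin\mathcal{E}^5$ and finishes the theorem.
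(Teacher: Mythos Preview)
Your overall architecture matches the paper's: pass to $Q=P^*$, list the seven partitions of the excess $5$, and eliminate each. The treatment of the small cases is close to the paper's, though note that for the partition $(3,1,1)$ the paper needs one extra ingredient you did not name: once the facet $F$ with $f_3(F)=8$ is forced to have exactly two square-pyramid $3$-faces (the rest tetrahedra), one computes $(f_0(F^*),f_1(F^*))=(8,17)$ and invokes Theorem~\ref{4poly} to get a contradiction; Lemma~\ref{3.5} alone does not cover this. Also, the partitions $(2,2,1)$ and $(2,1,1,1)$ do not in fact require any $h$-vector input: the paper disposes of them purely with Lemmas~\ref{3.1}, \ref{3.2}(i), and \ref{3.5}(i),(iii).

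The substantive gap is in your ``technical heart,'' the case $(1,1,1,1,1)$. First, the parity condition on square pyramids and triangular prisms is necessary but far from sufficient: since two facets of $Q$ meet in at most one $3$-face, a facet with $k$ non-tetrahedral $3$-faces needs $k$ distinct partners among the remaining four, so $P_D$ (with six) is immediately impossible, and a short bookkeeping argument cuts the list down to exactly three configurations: all $P_A$; all $P_C$; or two $P_B$ and three $P_A$. Second, and more importantly, the paper does \emph{not} resolve these subcases by subdividing the non-simplicial facets and appealing to $h$-vector nonnegativity. For the all-$P_A$ configuration it looks at the simplicial $4$-ball $B$ formed by the eight simplex facets of $Q$, computes $h(B)=(1,4,3,0,0,0)$ via Lemma~\ref{3.7}(iv), and then uses a \emph{shelling} of $B$ to produce a missing face of dimension $\geq 2$ in $\partial B$, contradicting that $\partial B$ is the join of a $5$-cycle and a $4$-cycle. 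For the all-$P_C$ and the mixed $P_B/P_A$ configurations it writes down the vertices explicitly, identifies $\partial\langle F_1,\dots,F_5\rangle$ as (a union of) simplex boundaries, and then applies the \emph{Lower Bound Theorem for pseudomanifolds with boundary} to the remaining simplex facets to reach a contradiction. Neither the shelling/missing-face step nor the LBT for pseudomanifolds is a consequence of Lemma~\ref{3.7}(iii) alone, so your proposed mechanism would not complete the argument as stated.
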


\begin{proof}
Suppose to the contrary that there is a $5$-polytope $P$ such that $f_0(P)=13$ and $f_1(P)=35$.
Let $v_1,\dots,v_{13}$ be the vertices of $P$ with $\deg v_1 \geq \cdots \geq \deg v_{13}$
and let $D=(\deg v_1,\deg v_2,\dots,\deg v_{13})$.
Since
$$\sum_{k=1}^{13} \deg v_k = 2 f_1(P)=70$$
and $\deg v_k \geq 5$ for each $k$,
$D$ must be one of the following.
\begin{itemize}
\item[(1)] $D=(10,5,\dots,5)$;
\item[(2)] $D=(9,6,5,\dots,5)$;
\item[(3)] $D=(8,7,5,\dots,5)$;
\item[(4)] $D=(8,6,6,5,\dots,5)$;
\item[(5)] $D=(7,7,6,5,\dots,5)$;
\item[(6)] $D=(7,6,6,6,5,\dots,5)$;
\item[(7)] $D=(6,6,6,6,6,5,\dots,5)$.
\end{itemize}
Below we will show a contradiction for each case.

(1) Suppose $D=(10,5,\dots,5)$.
Then $P^*$ has a $4$-face $F$ with $f_3(F)=10$.
This $F$ must be a simplicial $4$-polytope since all other facets of $P^*$ are simplices, which contradicts Lemma \ref{3.2}(i) saying that there are no simplicial $4$-polytopes with $10$ facets.

(2)
Suppose $D=(9,6,5,\dots,5)$.
$P^*$ has only two non-simplex $4$-faces $F$ and $G$.
These $4$-faces can have at most one non-simplex $3$-face.
By the assumption on $D$,
$F$ or $G$ must have $6$ facets.
This contradicts Lemma \ref{3.1} saying that any $4$-polytope with $6$ facets has at least two non-simplex facets.

(3)
Suppose $D=(8,7,5,\dots,5)$.
Let $F$ and $G$ be the $4$-faces of $P^*$ with $f_3(F)=8$ and $f_3(G)=7$.
Then $G$ is not simplicial by Lemma \ref{3.2}(i) and therefore $F$ is also not simplicial.
Hence $F$ and $G$ are almost simplicial and $F \cap G$ is a $3$-polytope which is not a simplex.
Since $f_3(F)=8$ and $f_3(G)=7$,
this contradicts Lemma \ref{3.5}(i) and (ii).

(4)
Suppose $D=(8,6,6,5,\dots,5)$.
Let $F,G$ and $G'$ be $4$-faces of $P^*$ with $f_3(F)=8$ and $f_3(G)=f_3(G')=6$.
Since $\partial \langle F,G,G'\rangle$ is a simplicial complex,
the polytopes $F,G$ and $G'$ have at most two non-simplex $3$-faces.
By Lemma \ref{3.1}, $G$ and $G'$ must be the polytope $P_A$, and $F$ must have $2$ square pyramids as its $3$-faces.
Thus, the $4$-polytope $F$ has $6$ tetrahedra and $2$ square pyramids as its facets. This implies
\[f_0(F^*)=f_3(F)=8 \ \mbox{ and } \ \ f_1(F^*)=f_2(F)= \frac 1 2 (5 \times 2 + 4 \times 6)=17.\]
However, this contradicts Theorem \ref{4poly} saying that $(8,17) \not \in \mathcal E^4$.

(5)
Suppose $D=(7,7,6,5,\dots,5)$.
Let $F,F'$ and $G$ be $4$-faces of $P^*$ with $f_3(F)=f_3(F')=7$ and $F(G)=6$.
Since all other $4$-faces of $P^*$ are simplices,
$F,F'$ and $G$ have at most two non-simplex $3$-faces.
Then, by Lemma \ref{3.1}, $G$ must be the polytope $P_A$, and $F$ and $F'$ have a square pyramid as its facets.
By Lemma \ref{3.5}(i), $F$ and $F'$ are not almost simplicial.
Hence $F$ and $F'$ have exactly two non-simplex facets, but this contradicts Lemma \ref{3.5}(iii).

(6)
Suppose $D=(7,6,6,6,5,\dots,5)$.
Let $F,G,G'$ and $G''$ be $4$-facets of $P^*$ with
$f_3(F)=7$ and $f_3(G)=f_3(G')=f_3(G'')=6$.
Since all other $4$-faces of $P^*$ are simplices, each of $F,G,G'$ and $G''$ can have at most three non-simplex $3$-faces.
By Lemma \ref{3.1}, $G,G'$ and $G''$ must be the polytope $P_A$ and have exactly two non-simplex $3$-faces.
Since $\partial \langle F,G,G',G''\rangle$ is a simplicial complex,
one of the following situations must occur:
\begin{itemize}
\item[(a)] $F$ is a simplicial polytope;
\item[(b)] $F$ has exactly two square pyramids as its facets and all other facets are simplices.
\end{itemize}
However, (a) cannot occur by Lemma \ref{3.2}(i)
and (b) cannot occur by Lemma \ref{3.5}(iii).

(7)
Suppose $D=(6,6,6,6,6,5,\dots,5)$.
Let $F_1,\dots,F_5$ be the $4$-faces of $P^*$ with $f_3(F_1)= \cdots =f_3(F_5)=6$.
Observing that each of $F_1,\dots,F_5$ must be one of $P_A,P_B,P_C$ and $P_D$ in Lemma \ref{3.1}.
It is not hard to see that one of the following situations must occur;
\begin{itemize}
\item[(a)] All the $F_1,\dots,F_5$ are $P_A$;
\item[(b)] All the $F_1,\dots,F_5$ are $P_C$;
\item[(c)] $F_1$ and $F_2$ are $P_B$.
$F_3,F_4$ and $F_5$ are $P_A$.
\end{itemize}

We first show that (a) cannot occur.
We may assume that $F_i \cap F_{i+1}$ is a square pyramid for $i=1,2,\dots,5$, where $F_6=F_0$.
Since $P_A$ has only one square as its $2$-faces,
$K=F_1\cap F_2 \cap \cdots \cap F_5$ must be this square, and the polyhedral complex generated by the facets of $P^*$ that do not contain $K$ is a shellable simplicial $4$-ball by a line shelling \cite[\S 8.2]{Zi}.
Let $B$ be this ball.
Clearly,
$$B=\langle G: G \mbox{ is a simplex facet of $P^*$}\rangle.$$
Hence $B$ has $13-5=8$ facets.
Also, since each $F_i \cap F_{i+1}$ is a pyramid over the square $K$, the faces $F_1,\dots,F_5$ can be written as
\[F_1=\conv(v_1,v_2,u_1,u_2,u_3,u_4),\]
\[F_2=\conv(v_2,v_3,u_1,u_2,u_3,u_4),\]
\[ \vdots \]
\[F_5=\conv(v_5,v_1,u_1,u_2,u_3,u_4)\]
with $K=\conv(u_1,\dots,u_4)$.
Then it follows that $\Gamma=\partial \langle F_1,\dots,F_5 \rangle$ is the join of the $5$-cycle and the $4$-cycle, and its $h$-vector is
$$(1,5,8,5,1)$$
since its entries coincide with the coefficients of the polynomial $(1+3t+t^2)(1+2t+t^2)$.
Let $h(B)=(1,h_1,h_2,h_3,h_4,0)$. Lemmas \ref{3.7} and \ref{3.8} say
$$1+h_1-h_4=5,\ 1+h_1+h_2-h_3-h_4=8 \mbox{ and } 1+h_1+h_2+h_3+h_4=8.$$ 
Then it is easy to see $h(B)=(1,4,3,0,0,0)$.
Let $G_1,\dots,G_8$ be a shelling of $B$.
Let 
$$R_j=\{v \in V(G_j): \conv(V(G_j)\setminus \{v\}) \in \langle G_1,\dots,G_{j-1}\rangle \}$$
and $S_j=V(G_j)\setminus R_j$,
where $V(G_j)$ is the vertex set of $G_j$.
Then
$$h_i=h_i(B)=|\{j: |R_j|=i\}|$$
for all $i$ (see \cite[\S 8.3]{Zi}). Since $h_3=h_4=0$,
we have $|S_j|=5-|R_j| \geq 3$ for all $j$.
By the definition of a shelling, $\conv(S_8)$ is a missing face of $\partial B$,
that is,
$\conv(S_8)$ is not a face of $\partial B$ but any its proper face is a face of $\partial B$.
Thus $\partial B$ has a missing face of dimension $\geq 2$.
However, the join of two cycles of length $\geq 4$ does not have any missing face of dimension $\geq 2$,
and
$\partial B=\partial \langle F_1,\dots,F_5\rangle$ is the join of the $5$-cycle and the $4$-cycle,
a contradiction.

We next prove that (b) cannot occur.
Observe that $P_C$ has $4$ non-simplex facets.
Since $\partial\langle F_1,\dots,F_5\rangle$ is a simplicial complex,
each $F_i \cap F_j$ must be a triangular prism.
Then it is easy to see that we can write
\begin{align*}
F_1 &= \conv(x_1,x_2,x_3,x_4,y_1,y_2,y_3,y_4),\\
F_2 &= \conv(x_1,x_2,x_3,x_5,y_1,y_2,y_3,y_5),\\
F_3 &= \conv(x_1,x_2,x_4,x_5,y_1,y_2,y_4,y_5),\\
F_4 &= \conv(x_1,x_3,x_4,x_5,y_1,y_3,y_4,y_5),\\
F_5 &= \conv(x_2,x_3,x_4,x_5,y_2,y_3,y_4,y_5),
\end{align*}
where each $\conv(x_i,x_j,x_k,y_i,y_j,y_k)$ is a triangular prism with triangles $\conv(x_i,x_j,x_k)$ and $\conv(y_i,y_j,y_k)$
(we assume that each $\conv(x_k,y_k)$ is an edge of $P^*$).
Using this formula,
one conclude that
\begin{align*}
&\partial \langle F_1,F_2,\dots,F_5 \rangle\\
&=\langle \conv(S): S \subset \{x_1,\dots,x_5\},\ |S|=4\rangle
 \cup
  \langle \conv(S): S \subset \{y_1,\dots,y_5\},\ |S|=4\rangle
\end{align*}
is the disjoint union of two copies of the boundary of a $4$-simplex.

If $\conv(x_1,\dots,x_5)$ is not a face of $P^*$, then, for each $S \subset \{x_1,\dots,x_5\}$ with $|S|=4$,
there is a unique $4$-face $G_S \not \in \{F_1,\dots,F_5\}$ of $P^*$ that contains $\conv(S)$.
This implies that, since $P^*$ has only 8 facets other than $F_1,\dots,F_5$, either $\conv(x_1,\dots,x_5)$ or $\conv(y_1,\dots,y_5)$ must be a face of $P^*$ (otherwise $P^*$ has at least 10 facets other than $F_1,\dots,F_5$).
We assume that $\conv(x_1,\dots,x_5)$  is a face of $P^*$.
Let $G_1,\dots,G_8$ be the simplex $4$-faces of $P^*$
and assume $G_1=\conv(x_1,\dots,x_5)$.
Then $\Gamma=\langle G_2,\dots,G_8\rangle$ is a pseudomanifold with
\[\partial \Gamma=\langle \conv(S): S \subset \{y_1,\dots,y_5\},\ |S|=4\rangle.\]
Let $n_i$ be the number of interior vertices of $\Gamma$.
By the Lower Bound Theorem for pseudomanifolds with boundary \cite{Fo} (see also \cite[Theorem 1.2]{Tay}), $\Gamma$ must have at least $5+4n_i-4$ facets.
Since $\Gamma$ only has $7$ facets, we have $n_i \leq 1$. However, this implies that $\Gamma$ is either the $4$-simplex or the cone over the boundary of the $4$-simplex, contradicting the fact that $\Gamma$ has $7$ facets.

We finally prove that (C) cannot occur.
Since $\partial \langle F_1,\dots,F_5\rangle$ is a simplicial complex,
$F_1 \cap F_2$ must be a triangular prism and $F_i \cap F_j$ is a square pyramid for all $i \in \{1,2\}$ and $j \in \{3,4,5\}$.
It is not hard to see that $F_1,\dots,F_5$ can be written as
\begin{align*}
F_1 &= \conv(x,y,z,x',y',z',v_1),\\
F_2 &= \conv(x,y,z,x',y',z',v_2),\\
F_3 &= \conv(x,y,x',y',v_1,v_2),\\
F_4 &= \conv(x,z,x',z',v_1,v_2),\\
F_5 &= \conv(y,z,y',z',v_1,v_2),
\end{align*}
where $\conv(x,y,z,x',y',z')$ is a triangular pyramid
with triangles $\conv(x,y,z)$ and $\conv(x',y',z')$ (we assume that $\conv(x,x'),\conv(y,y')$ and $\conv(z,z')$ are edges).
A routine computation shows
\begin{align*}
\partial \langle F_1,F_2,\dots,F_5 \rangle
=& \langle \conv(S): S \subset \{v_1,v_2,x,y,z\},\ |S|=4\rangle\\
&\cup \langle \conv(S): S \subset \{v_1,v_2,x',y',z'\},\ |S|=4\rangle
\end{align*}
is the union of two copies of the boundary of a $4$-simplex intersections in the edge $\conv(v_1,v_2)$.
Then the exactly same argument as in the case (b) works,
namely, one case show that either $\conv(v_1,v_2,x,y,z)$ or $\conv(v_1,v_2,x',y',z')$ must be a face of $P^*$ and conclude a contradiction by the Lower Bound Theorem for pseudomanifolds with boundary.
\end{proof}


\begin{thebibliography}{1}


\bibitem[Ba]{Bar}
D.W. Barnette, The projection of the $f$-vectors of $4$-polytopes onto the $(E,S)$-plane,
\textit{Discrete Math.} \textbf{10} (1974), 201--209.


\bibitem[BR]{BR}
D.W. Barnette and J.R. Reay,
Projection of $f$-vectors of four-polytopes,
\textit{J.\ Combinatorial Theory, Ser. A} \textbf{15} (1973), 200--209.


\bibitem[BL]{BL}
M. Bayer and C.W. Lee,
Combinatorial aspects of convex polytopes,
in Handbook of Convex Geometry, P. Gruber and J. Wills, eds., North-Holland, Amsterdam, 1993, pp. 485--534.


\bibitem[Br]{Br}
A. Br\o ndsted,
An introduction to convex polytopes,
Graduate Texts in Mathematics, vol.\ 90, 
Springer, New York, 1983.

\bibitem[Fo]{Fo}
A. Fogelsanger, The generic rigidity of minimal cycles, Ph.D.\ Thesis, Cornell University, 1988.

\bibitem[FMM]{Mi}
K. Fukuda, H. Miyata and S. Moriyama,
Classification of Oriented Matroids,
abirable at\\
{\tt http://www-imai.is.s.u-tokyo.ac.jp/$\sim$hmiyata/oriented\_matroids/}

\bibitem[Gr]{Gr}
B. Gr\"unbaum,
Convex Polytopes,
Wiley Interscience,
New York,
1967.

\bibitem[Ka]{LBT}
G. Kalai,
Rigidity and the lower bound theorem. I,
\textit{Invent. Math.} \textbf{88} (1987), 125--151. 


\bibitem[PUY1]{PUY}
G. Pineda-Villavicencio, J. Ugon and D. Yost,
Lower bound theorems for general polytopes,
arXiv:1509.08218.

\bibitem[PUY2]{PUY2}
G. Pineda-Villavicencio, J. Ugon and D. Yost,
The excess degree of a polytope,
arXiv:1703.10702.

\bibitem[SZ]{SZ}
H. Sj\"oberg and G.M. Ziegler,
Characterizing face and flag vector pairs for polytopes,
arXiv:1803.04801.

\bibitem[St]{St}
R.P. Stanley,
Combinatorics and Commutative Algebra, Second Edition, Birkh\"auser, Boston,
Basel, Berlin, 1996.

\bibitem[Ta]{Tay}
T.-S. Tay,
Lower-bound theorems for pseudomanifolds,
\textit{Discrete Comput. Geom.} \textbf{13} (1995),
203--216. 

\bibitem[Zi]{Zi}
G.M. Ziegler,
Lectures on polytopes,
Graduate Texts in Mathematics, vol. 152, Springer, New York, 1995.

\end{thebibliography}
\end{document}